\newcommand{\footremember}[2]{%
   \footnote{#2}
    \newcounter{#1}
    \setcounter{#1}{\value{footnote}}%
}
\titleformat*{\paragraph}{\itshape}
\newcommand{\DoTikzmark}[1]{%
  \tikz[remember picture] \coordinate[shift={(.6ex,-.4ex)}](#1);%
}
\newcommand{\colrow}[3][]{%
  \tikz[overlay,remember picture, line width=10pt]
    \draw[shorten >=-.15em, shorten <=-1em, #1] (#2)--(#3);
}
\newcommand\x{\colorbox{black}{}}
\newtheorem{theorem}{Theorem}[section]
\theoremstyle{definition}
\newtheorem{lemma}[theorem]{Lemma}
\newtheorem{example}[theorem]{Example}
\newtheorem{proposition}[theorem]{Proposition}
\newtheorem{point}[theorem]{}
\newtheorem*{lemma-non}{Lemma}
\newtheorem{mthm}{Main Theorem}
\newcommand{\ignore}[1]{}
\newcommand{\lowp}[1]{low(#1)}
\newcommand{\Prob}[1]{\mathbb{P} \left( #1 \right)} 
\newcommand{\bdmat}{D}
\newcommand{\define}[1]{{\bf #1}} 
\newcommand{\fillin}{\#}
\newcommand{\row}{r}
\newcommand{\ver}{n}
\newcommand{\column}{c}
\newcommand{\betti}{\beta}
\newcommand{\indicator}{\mathbbm{1}}
\newcommand{\radius}{\rho} 
\newcommand{\cechmod}{\check{C}(\mathcal{X}_n,\alpha)}
\newcommand{\cechfilt}{\check{C}(\mathcal{X}_n)}
\newcommand{\VRcompl}{\text{VR}(\mathcal{X}_n,\alpha)}
\newcommand{\VRfilt}{\text{VR}(\mathcal{X}_n)}
\newcommand{\ERcompl}{\text{ER}(n,\alpha)}
\newcommand{\ERfilt}{\text{ER}(n)}
\newcommand{\Cspace}[0]{\mathfrak{C}(G)}
\newcommand{\Tspace}[0]{\mathfrak{T}(G)}
\newcommand{\nbtri}[1]{\tau(#1)}
\newcommand{\neighG}[1]{\mathcal{N}(#1)}
\newcommand{\neigh}[2]{\mathcal{N}_{#1}(#2)}
\newcommand{\cut}[1]{\nabla(#1)}
\newcommand{\spannedG}[2]{#1[#2]}
\newcommand{\fullG}[1]{K_{#1}}
\newcommand{\abs}[1]{\Bigr\vert #1 \Bigr\vert}
\renewcommand{\cite}{\parencite}
\begin{document}

\title{Expected Complexity of Persistence Barcode Computation via Matrix Reduction}

\author{Barbara Giunti \footremember{SUNY}{Graz University of Technology and SUNY University at Albany, 1400 Washington Avenue, HD-125, \texttt{bgiunti@albany.edu},\orcidlink{0000-0002-3500-8286}}, Guillaume Houry \footremember{Paris}{{\'E}cole Polytechnique Palaiseau, Route de Saclay, 91128 Palaiseau Cedex, \texttt{guillaume.houry@live.fr}}, Michael Kerber \footremember{TU1}{Graz University of Technology, Kopernikusgasse 24, Graz, Austria, \texttt{kerber@tugraz.at}, \orcidlink{0000-0002-8030-9299}}, Matthias S\"ols \footremember{TU}{Graz University of Technology, Kopernikusgasse 24, Graz, Austria, \texttt{{matthias.soels@tum.de}}, Corresponding Author, \orcidlink{0009-0004-9124-9027}}}

\date{}

\maketitle

\begin{abstract}
We study the algorithmic complexity of computing the persistence barcode of a randomly generated filtration. 
We provide a general technique to bound the expected complexity of
reducing the boundary matrix in terms of the density of its reduced form.
We apply this technique finding upper bounds for the average fill-in (number of non-zero entries) of the boundary matrix on \v{C}ech, Vietoris--Rips and Erd\H{o}s--R\'enyi filtrations after matrix reduction, thus obtaining bounds on the expected complexity of the barcode computation.
Our method is based on previous results on the expected Betti numbers of the corresponding complexes. 
Our fill-in bounds for \v{C}ech and Vietoris--Rips complexes are asymptotically tight up to a logarithmic factor. 
In particular, both our fill-in and computation bounds are better than the worst-case estimates.
We also provide an Erd\H{o}s--R\'enyi filtration realising the worst-case fill-in and computation.
\end{abstract}

\textbf{MSC numbers:} 55N31, 68T09

\textbf{Keywords:} Matrix reduction, Average complexity, Persistent homology, Barcode 

\section{Introduction}
\paragraph{Motivation and results.} 
The (persistence) barcode is an invariant that extracts topological information from data. 
It has been proven to be extremely useful in applications (see \cite{donut} for over 300 examples), not only for the insights it provides on the data, but also because it can be converted into formats amenable to statistical and Machine Learning analysis. 
Therefore, understanding its computation is crucial for data analysis.
The standard algorithm used to compute barcodes, first introduced in \cite{edelsbrunner2000topological}, is based on the Gaussian reduction of the boundary matrix.
It performs left-to-right column additions until the indexes of the lowest elements of non-zero columns in the matrix are pairwise distinct; the matrix is called \define{reduced} in this case.
For a $(\row\times \column)$-boundary matrix with $r\leq c$, this reduction process runs in $\mathcal{O}(\row^2\column)$ time, and this high complexity can be indeed achieved by concrete families of examples (see \cite{morozov2005persistence} or \cref{S_worst_case}).
However, designing these worst-case examples requires some care~-- for instance, the boundary matrix necessarily has to become dense (i.e., has $\Omega(r^2)$ non-zero entries) during the reduction. 
On the other hand, such dense reduced matrices are hardly formed in realistic data sets, and the reduction algorithm scales closer to linear in practice \cite{bauer2017phat, otter2017roadmap}. 
This leads to the hypothesis that the worst-case examples are somewhat pathological, and the ``typical'' performance of the algorithm is better than what the worst-case predicts.
The motivation of this paper is to provide formal evidence for this hypothesis, 
mathematically grounding the displayed efficiency of the barcode computation.

 Our method hinges on two links: the fact that the computational complexity is bounded above by the density of the reduced boundary matrix, and the fact that dense columns in the reduced boundary matrix can be related to non-trivial homology of the filtration.
Hence, if a random filtration is unlikely to have nonzero Betti number from a certain step value onward, then we can bound its expected computational complexity. In order to study the ``typical" performance of the matrix reduction algorithm, the following instances of random filtrations are considered:

\begin{description}
\item[\v{C}ech model.] Sample $\ver$ points i.i.d. uniformly distributed from the $d$-dimensional unit cube and build the \v{C}ech filtration over these points.   
\item[Vietoris--Rips model.] Construct the Vietoris--Rips filtration over with points placed as in the \v{C}ech case.
\item[Erd\H{o}s--R\'enyi model.] Given $\ver$ vertices, apply a random permutation on the $\binom{\ver}{2}$ edges, and build the clique filtration over this edge order. 
\end{description}

The resulting $k$-dimensional boundary matrices consist of $\row=\binom{\ver}{k+1}=\Theta(\ver^{k+1})$ rows and $\column=\binom{\ver}{k+2}=\Theta(\ver^{k+2})$ columns. 
The naive bound for the cost of matrix reduction would therefore yield a time complexity of $\mathcal{O}(n^{3k+4})$ which assumes that the boundary matrices gather $\Omega(n^{2k+2})$ non-zero entries arising during matrix reduction.

We refer to the number of non-zero entries of the reduced matrix
as the \define{fill-in}. 
Our first main result is that the expected fill-in is given by $\mathcal{O}(n^{2}\log^{2k}n+n^{k+1})$ and the expected cost of matrix reduction is bounded by $\mathcal{O}(n^{k+4}\log^{2k}n+ n^{2k+3})$, for all $k\geq 1$ for \v{C}ech and Vietoris--Rips filtrations. 
Our second main result is that for Erd\H{o}s--R\'enyi case in degree $k=\, 1$, the expected fill-in and cost of matrix reduction are bounded by $\mathcal{O}(n^{3}\log n)$ and $\mathcal{O}(n^{6}\log n)$, respectively.
Note that both the expected fill-in as well as the expected cost for all three models is asymptotically better than the worst-case prediction. 

In the \v{C}ech and Vietoris--Rips case, our bound on the fill-in is asymptotically tight for $k>1$ because $\Omega(n^{k+1})$ is a lower bound on the fill-in.
For $k=1$, the bound becomes $O(n^2\log^2 n)$ which matches the lower bound
of $\Omega(n^2)$ up to a logarithmic factor.
We also provide some experiments that suggest that neither our fill-in bound for
the Erd\H{o}s--R\'enyi case nor our time bound for the \v{C}ech and Vietoris--Rips case are tight. 
Moreover, we present a construction realizing the worst-case for the (clique) Erd\H{o}s--R\'enyi model, for which the reduction algorithm for $k=1$ yields a matrix with $\mathcal{O}(n^4)$ fill-in and runs in $\mathcal{O}(n^7)$ time. 
This shows that the worst-case bounds on fill-in and runtime are tight for this filtration type.

\paragraph{Proof outline.}
We illustrate the proof for the \v{C}ech model, the Vietoris--Rips case is proved similarly. The unreduced boundary matrix encodes the filtration arising from the nerves of a growing union of balls, based around the randomly sampled points. 
During matrix reduction, columns in the boundary matrix get added to each other from left-to-right whenever their (non-zero) lowest entries coincide. 
Three cases are possible: A column may remain unchanged during matrix reduction, a column may turn to a zero-column, or a column may get reduced to some new non-zero lowest entry. 
The last case is critical: columns that undergo  reduction may get filled-in, i.e., become dense. 
In turn, dense columns slow down computation, because they affect memory but especially since they considerably increase the amount of operations required.
Such columns typically represents a topological feature of non-zero persistence.  
By a result of Kahle \cite{kahle2011random}, having non-zero Betti numbers gets very unlikely after a certain scale in the filtration and a high number of sampled points. 
This implies that the number of such dense columns is bounded in expectation. 
The same approach also works for Erd\H{o}s--R\'enyi filtrations, adapting a probabilistic bound for Betti numbers from~\cite{demarco2013triangle}
to our situation. 

\paragraph{Conference version.}
Parts of this work have already appeared as a conference version \cite{conferenceversion}. 
The framework introduced in \cite{conferenceversion} is limited to clique-filtrations with boundary matrices of dimension one. 
Hence only the Erd\H{o}s--R\'enyi and Vietoris--Rips model were considered,
and the dimension was restricted to $1$. 
In the present work, we add the analysis of the \v{C}ech model (which is not a clique filtration) and extend the analysis of the Vietoris--Rips model to arbitrary dimensions.

\paragraph{Related work.}
There are many variants of the standard reduction algorithm with the goal to improve its practical performance, partially with tremendous speed-ups, e.g.~\cite{adams2014javaplex, bauer2017phat, henselman2016matroid, maria2014gudhi, morozov2007dionysus, perez2021giotto}.
Even faster algorithms exist for special types of filtrations, for instance Vietoris-Rips complex~\cite{bauer2021ripser},
or in combination with pre-processing metods for voxel data~\cite{robins-skeletonization,wagner-socg,tierny-sandwich}.
All these approaches are eventually based on matrix reduction and do not overcome the worst-case complexity of Gaussian elimination. 
We consider only the standard reduction algorithm in our analysis although we suspect that our techniques apply to many of these variants as well. 
An asymptotically faster algorithm in matrix-multiplication time is
known~\cite{zigzag_matrix_multiplication}, as well as a randomized output-sensitive algorithm that computes only the most persistent features~\cite{Chen2013Output}. 
However, these approaches are not based on elementary
column operations and slower in practice.

In the persistence computation, the order of the simplices (and thus of the columns and rows in the boundary matrix) is crucial and can be altered only in specific cases \cite{bauer2017phat, bauer2022keeping, abcw}. 
This order also determines which elements can be used as pivots.
For that reason, we did not see how to transfer analyses of related problems,
such as the expected complexity of computing the Smith normal form~\cite{smithnormal} or the study of fill-in for linear-algebraic algorithms~\cite{directmethods,order_pivoting} to our setup. 
These methods require either to interleave row and column operations and swap rows and columns, or to reorder the rows and columns. 

The only previous work on the average complexity of persistence computation is by Kerber and Schreiber~\cite{Kerber2020Expected,SchreiberThesis}.
They show that, for the so-called shuffled random model, the average complexity is better than what the worst-case predicts.
However, the shuffled model is further away from realistic (simplicial) inputs than the three models studied in this paper. 
Moreover, their analysis requires a special variant of the reduction algorithm while our analysis applies to the original reduction algorithm with no changes.
In addition, the PhD thesis of Schreiber~\cite{SchreiberThesis} contains extensive experimental evaluations of several random models (including Vietoris--Rips and Erd\H{o}s--R\'enyi); our experiments partially redo and confirm these evaluations.

While the computational complexity for persistence has hardly been studied in terms of expectation, extensive efforts have gone into expected topological properties
of random simplicial complexes. We refer to the surveys by Kahle~\cite{Kahle-survey} and Bobrowski and Kahle~\cite{BobrowskiKahle} for an overview for the general
and the geometric case, respectively.
From this body of literature, we use the works by Demarco, Hamm, and Kahn~\cite{demarco2013triangle} and Kahle~\cite{kahle2011random} in our work. 
More in detail, we adapt some of their results on the expected Betti numbers of random filtrations to upper-bound the number of computationally expensive columns, thus obtaining a bound on the expected computational complexity.
There are also recent efforts to study expected properties of persistent homology over random filtrations, for instance the expected length 
of the maximally persistent cycles in a uniform Poisson process~\cite{bobrowski2017maximally}, properties of the expected persistence diagram over random point clouds~\cite{Divol2019density}, or the expected number of intervals in the decomposition of multiparameter persistence modules \cite{angel_socg1}, \cite{angel_socg2}. 

Finally, to the best of our knowledge, the only construction to achieve the worst-case running time for matrix reduction is by Morozov~\cite{morozov2005persistence}, which however involves only a linear number of edges and triangles with respect to the number of vertices. 
Therefore, it is not one of the models we considered, which are more common in data analysis.
\paragraph{Outline.}
In \cref{S_basics}, we introduce basic notions on (boundary) matrices and their reduction as well as simplicial homology.
In \cref{Fill-in analysis}, we prove the connection between Betti numbers and certain columns of the reduced matrix, which leads to a generic bound for the fill-in.
We then apply the general bound for the \v{C}ech (\cref{section_cech}), Vietoris--Rips (\cref{section_rips}) and Erd\H{o}s--R\'enyi (\cref{section_erdos_renyi}) filtrations. 
The technical results of \cref{section_cech,section_rips,section_erdos_renyi} are proven in \cref{A_lemma_C,A_lemma_VR,A_lemma_ER}, respectively.
In \cref{S_experiments}, we compare our bounds with experimental evaluation. 
In \cref{S_worst_case}, we construct a clique filtration realising the worst-case fill-in and cost.
We conclude in Section~\ref{section_conclusion}. 

\section{Basic notions}\label{S_basics}
\subsection{Matrix reduction}\label{subsec:Matrix_reduction}
In the following, fix an $(\row\times\column)$-matrix $M$ over $\mathbb{Z}_2$, the field with two elements,
and let its columns be denoted by $M_1,\ldots,M_\column$.
For a non-zero column $M_i$, we let its \define{pivot} be the index of the lowest row
in the matrix that has a non-zero entry, denoted by $\lowp{M_i}$. 
We write $\fillin M_i$ for the number of non-zero entries in the column,
and $\fillin M\coloneqq\sum_{i=1}^\column \fillin M_i$ for the number of non-zero entries
in the matrix. Clearly, $\fillin M\leq \row\cdot\column$; if $\fillin M$ is significantly smaller than that value (e.g., only linear in $\column$),
then the matrix is usually called ``sparse''. 

A \define{left-to-right column addition} is the operation
of replacing $M_i$ with $M_i+M_j$ for $j<i$. 
If $M_i$ and $M_j$ have the same pivot before the column addition,
the pivot of $M_i$ decreases under the column addition (or the column $M_i$ becomes zero, if $M_i=M_j$).

\define{Matrix reduction} is the process of repeatedly performing left-to-right column additions
until no two columns have the same pivot. For concreteness, we fix the following version:
we traverse the columns from $1$ to $\column$ in order. 
At column $i$, as long as it is non-zero and has a pivot that appears as a pivot in some column $j<i$, we add column $j$ to column $i$.
The resulting matrix is called \define{reduced}.
\begin{algorithm}
    \caption{Matrix reduction} 
	\DontPrintSemicolon
	\KwIn{Boundary matrix $M = ( M_{1}, \dots M_c)$}
	\KwOut{Reduced matrix $M' = ( M'_{1}, \dots M'_c)$}
	\For{$i = 1,\dots,c$}{ 
	    $M'_{i} = M_{i}$ \\
            \While{\label{al_line_while} $M'_i\neq 0$ and there exists $j < i$ with $\lowp{M'_{j}}=\lowp{M'_{i}}$}{
				$M'_{i} \leftarrow M'_{i} + M'_{j}$}
		 
	}
\label{Al_standard}
\end{algorithm}

We define the \define{cost of a column addition} of the form $M_i\leftarrow M_i+M_j$
as $\fillin M_j$, i.e., the number of non-zero entries in the column that is added to $M_i$.
The \define{cost of a matrix reduction} for a matrix $M$ is then the added cost of all column
additions performed during the reduction, and it is denoted by $cost(M)$. 
The \define{fill-in} of a reduced matrix $M'$ is $\fillin M'$, 
the number of non-zero entries of $M'$.
We can relate the cost of reducing a matrix $M$
to the fill-in of the reduced matrix as follows.
\begin{lemma}\label{lem:cost_bound}
For a matrix $M$ with $\column$ columns, let $M'$ denote its reduced matrix. 
Then
\[cost(M)\leq\column\cdot \fillin M'\]
\end{lemma}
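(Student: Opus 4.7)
The plan is to bound the cost column-by-column and then sum. The key observation is that, in the algorithm as stated, when column $i$ is being processed, all columns $M_j$ with $j<i$ have already been finalised; that is, $M_j = M'_j$ throughout the inner \texttt{while} loop. Thus every column addition that occurs while reducing column $i$ has the form $M'_i \leftarrow M'_i + M'_j$ with $j<i$, and its cost is exactly $\fillup M'_j$.

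Next I would show that, for a fixed $i$, each reduced column $M'_j$ with $j<i$ can be added to $M'_i$ at most once during the reduction of column $i$. The reason is that each addition performed inside the \texttt{while} loop strictly decreases $\lowp{M'_i}$ (or sets $M'_i$ to zero, after which no further additions occur): indeed, the addition is triggered precisely when $\lowp{M'_i}=\lowp{M'_j}$, and over $\mathbb{Z}_2$ the two entries at this row cancel, so the new pivot lies strictly above. Since the reduced columns $M'_1,\dots,M'_{i-1}$ have pairwise distinct pivots, and $\lowp{M'_i}$ is a monotonically decreasing sequence of row indices throughout the reduction of column $i$, any particular value of $\lowp{M'_j}$ is matched at most once. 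Hence each $M'_j$ with $j<i$ is used at most once as a summand.

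Putting these two observations together, the cost of reducing column $i$ is at most
\[
\sum_{\substack{j<i\\ M'_j\neq 0}} \fillup M'_j \;\leq\; \sum_{k=1}^{\column} \fillup M'_k \;=\; \fillup M'.
\]
Summing over the $\column$ columns yields $cost(M)\leq \column\cdot \fillup M'$, as claimed.

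I do not expect a real obstacle here; the only subtle point is justifying that $M_j=M'_j$ at the moment column $j$ is added (which follows immediately from the left-to-right structure of the algorithm) and that an already-used pivot cannot reappear as $\lowp{M'_i}$ later (which follows from strict monotonicity of $\lowp{M'_i}$ during the \texttt{while} loop). Neither requires additional machinery beyond what is set up in \cref{S_basics}.
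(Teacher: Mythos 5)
Your proof is correct and follows essentially the same approach as the paper: bound the cost of reducing each column by $\fillup M'$ (using that each already-reduced column can be added at most once) and sum over the $\column$ columns. You simply spell out in more detail the "at most once" claim (via strict monotonicity of the pivot during the \texttt{while} loop), which the paper states without elaboration.
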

\begin{proof}
Let $M'_{\leq i}$ denote the matrix formed by the first $i$ columns of $M'$. 
Then, after the matrix reduction algorithm has traversed the first $i$ columns,
the partially reduced matrix agrees with $M'$ on the first $i$ columns. In order to reduce
column $i+1$, the algorithm adds some subset of columns of $M'_{\leq i}$ to $M_{i+1}$,
each column at most once. Hence, the cost of reducing column $M_{i+1}$ is bounded by $\fillin M'_{\leq i}$.
We can therefore bound
\[cost(M)\leq \sum_{i=1}^{\column}\fillin M'_{\leq i}\leq \sum_{i=1}^\column \fillin M' = \column\fillin M'.\qedhere\]
\end{proof}

We interpret the cost of $M$ as a model of the (bit) complexity for performing matrix reduction.
Indeed, in practice, we will apply matrix reduction on (initially) sparse matrices whose columns are usually represented to contain only the indices of their non-zero entries to reduce memory consumption. 
If we arrange these indices
in a balanced binary search tree structure, for instance, performing a column operation $M_i\leftarrow M_i+M_j$
can be realized in $\mathcal{O}(\fillin M_j\log\fillin M_i)$ time, which matches our cost up to a logarithmic factor.
Alternatively, we can store columns as linked lists of non-zero indices and then, reducing column $i$, 
we can transform the column in a $\{0,1\}$-vector of length $\row$, and perform all additions in time proportional to $\fillin M_j$, resulting in a total complexity of $\mathcal{O}(\column\cdot(\row+\fillin M'))$. 
This complexity matches
$cost(M)$ if the reduced matrix has at least $\Omega(\row)$ non-zero entries (which will be the case for the cases studied
in this paper). 
We refer to~\cite{bauer2017phat} for
a more thorough discussion on the possible choices of data structures for (sparse) matrices.

\paragraph{Constant indices and pivotal indices.}
We call a column $M_i$ \define{constant}
  if it is not modified during matrix reduction, that is, $M_i=M'_i$. In this case, the row index of the pivot of its reduction $\lowp{M'_i}$ is a \define{constant index}. Otherwise, if $M_i\neq M'_i$ and the reduced column $M'_i$ is not zero, we will refer to $M'_i$ as a \define{pivotal} column. The row index of its pivot $\lowp{M'_i}$ is then a \define{pivotal index}. See Figure~\ref{fig:mat_stairs} for an illustration of these concepts.
\begin{figure}[H]
    \centering
    \begin{blockarray}{ccccc}
    & abc & acd & abd & bcd \\
    \begin{block}{c[cccc]}
    bc & 1 &   &  & 1 \\
    ad &   & 1 & 1 &   \\
    ab & 1 &   & 1 &   \\
    cd &   & 1 &   & 1 \\
    ac & \DoTikzmark{stepind1}{1} & 1 &   &   \\
    bd &   &   & \DoTikzmark{stepind2}{1} & 1 \\
    \end{block}
    \end{blockarray}
    \colrow[red, opacity=.6]{stepind1}{stepind1}
    \colrow[red, opacity=.6]{stepind2}{stepind2}
    \hspace{-0.4cm}
    $\xrightarrow[\text{reduction}]{\text{left-to-right column}}$
    \begin{blockarray}{ccccc}
    & abc & acd & abd & bcd \\
    \begin{block}{c[cccc]} 
        bc & 1 & 1 &   &  \\
        ad &   & 1 & 1 &  \\
        ab & 1 & 1 & 1 &  \\
        cd &   & \DoTikzmark{critind}{1} &   &  \\
        ac & \DoTikzmark{stepind1}{1} &  &   &  \\
        bd &   &   & \DoTikzmark{stepind2}{1} &  \\
    \end{block}
    \end{blockarray}
    \colrow[red, opacity=.6]{stepind1}{stepind1}
        \colrow[red, opacity=.6]{stepind2}{stepind2}
    \colrow[green, opacity=.3]{critind}{critind}
    \caption{Example of a (1-boundary) matrix in (on the left) and its reduced matrix (on the right), from the complex in Figure \ref{fig:closing_edge}. The columns corresponding to the simplices $abc$ and $abd$ are constant columns, the column corresponding to $acd$ is pivotal. The row indices of the red elements are constant, and the row index of the green element is a pivotal index. The zeros are not displayed.}
    \label{fig:mat_stairs}
\end{figure}

The next lemma is simple, yet crucial for our approach:
\begin{lemma}\label{lem:fillup_staircase}
\[
\fillin M' \leq \sum_{\text{S is a constant column of $M$}} \fillin S + \sum_{\text{$p$ is a pivotal index}} p
\]
\end{lemma}
\begin{proof}
Each column of $M'$ is zero or has some pivot $p$, which is either a constant index or a pivotal index. 
If $p$ is a constant index, then it comes with a constant column of $M$, which remains unchanged. Otherwise, the number of non-zero entries of a pivotal column is bounded by its pivot.
\end{proof}

\subsection{Simplicial filtrations, boundary matrices and homology}\label{subsec: Simplicial filtrations, boundary matrices and homology}
\paragraph{Simplicial complexes.}
Given a finite set $V$, a \define{simplicial complex} $K$ over $V$
is a collection of subsets of $V$, called \define{simplices}, with the property that if
$\sigma\in K$ and $\tau\subseteq \sigma$, also $\tau\in K$. A simplex with $(k+1)$-elements
is called \define{$k$-simplex}. 
$0$-, $1$-, and $2$-simplices are also called \define{vertices}, \define{edges},
and \define{triangles}, respectively. 
For a $k$-simplex $\sigma\in K$, any simplex $\tau$ with $\tau\subseteq \sigma$ is a \define{face} of $\sigma$ and a we call $\tau$ a \define{facet} of $\sigma$ whenever $\tau$ is $(k-1)$-dimensional.
The set of facets is called the \define{boundary} of $\sigma$.
A \define{subcomplex} $L$ of a simplicial complex $K$ is a subset of $K$ which is itself a simplicial complex.

\paragraph{Filtations.}%
A \define{filtered simplicial complex} is a simplicial complex $K$ with a fixed (total) ordering in every dimension. For the \v{C}ech, Vietoris-Rips and Erd\H{o}s--R\'enyi filtrations considered here, each simplex $\sigma\in K$ comes with a \define{entrance time} $\radius_\sigma\in\mathbb{R}_{\geq 0}$ such that for each face $\tau$ of $\sigma$, $\radius_\tau\leq \radius_\sigma$. 
Putting together all simplices in $K$ with the same entrance time $\radius\geq 0$ yields a subcomplex $K_{\radius}$ of $K$ and ordering all entrance times in an ascending way leads to a nested sequence of subcomplexes $\emptyset\subseteq K_{\radius_0}\subseteq K_{\radius_1} \subseteq \cdots \subseteq K_{\radius_N}=K$ which we call a \define{filtration} of $K$. Given any dimension $k$, we fix a total order on the set of $k$-simplices by first ordering by ascending entrance times, and then ordering simplices with equal entrance time either arbitrarily, or by some explicit rule specified later. 

\paragraph{Boundary matrices.}%
Let $K$ be a filtered simplicial complex with $\row$ $k$-simplices and $\column$ $(k+1)$-simplices. The $k$-dimensional \define{boundary matrix} of $K$ is a $(\row\times\column)$-matrix, where the order of the rows and columns is induced by the order on the underlying filtered simplicial complex and the entry $(i,j)$ in the matrix
is $1$ if the $i$-th $k$-simplex is a facet of the $j$-th $(k+1)$-simplex of $K$, and $0$ otherwise. 
We interpret boundary matrices as matrices over $\mathbb{Z}_2$. 
For a boundary matrix $\bdmat$ in dimension $k$, we have that $\fillin\bdmat=(k+2)\cdot\column$ because
every $(k+1)$-simplex has $k+2$ facets. 
Hence, boundary matrices are sparse, but this sparsity is not necessarily preserved by matrix reduction~\cite{morozov2005persistence}.

\paragraph{Homology.}%
We recall the basic notions of simplicial homology (with coefficients over the field $\mathbb{Z}_2$):
for a simplicial complex $K$, the \define{$k$-th chain group} $C_k$ is the vector space over $\mathbb{Z}_2$ that has the $k$-simplices
of $K$ as basis elements. 
Let $\partial_k:C_k\to C_{k-1}$ denote the unique homomorphism that maps every $k$-simplex $\sigma$
to the sum of its facets. We call the kernel $Z_k$ of $C_k$ the \define{$k$-th cycle group} and the image $B_k$ of $\partial_{k+1}$
the \define{$k$-th boundary group}; note $B_k\subseteq Z_k$ because $\partial_k\circ\partial_{k+1}=0$. 
The \define{$k$-th homology group} $H_k$ of $K$ is then defined as $Z_k/B_k$. 
Note that despite the name ``group'' for chains, cycles, boundaries,
and homologies, all these objects are vector spaces (because we take coefficients over $\mathbb{Z}_2$).
The \define{$k$-th Betti number} of $K$, denoted by $\betti_k(K)$, is the dimension of $H_k(K)$.

\paragraph{Persistent homology and matrix reductions.}
 Matrix reduction gives a wealth of information when applied to a filtered boundary matrix $\bdmat$.
For once, it yields the rank of $\bdmat$, which can be used, for instance, to compute the Betti numbers
of the simplicial complex: Writing $\bdmat$ and $\overline{\bdmat}$ for the boundary matrix in dimension $k$ and $k-1$, respectively, and $n_k$ for the
number of $k$-simplices, we have that $\betti_k(K)=n_k-\mathrm{rank}(\bdmat)-\mathrm{rank}(\overline{\bdmat})$. 
Moreover, because the matrix reduction respects the order
of the simplices, the pivots of the reduced matrix $D'$ yield the so-called \define{persistent barcode}
of the filtered simplicial complex consisting of the following two sets of intervals:
\begin{equation*}
\begin{split}
&P:=\{(\rho_i,\rho_j)\mid D'_j\neq 0 \text{ and } i=\lowp{D'_j} \} \\
&E:=\{(\rho_i,\infty)\mid D'_i=0 \text{ and } i\neq \lowp{D'_j} \; \forall j=1,\dots,n\}.
\end{split}
\end{equation*}
Here $\rho_i$ denotes the entrance time of the simplex associated to the $i$-th row or column of the boundary matrix (or its reduction). 
The intervals $(\rho_i,\rho_j)$ in the persistence barcode can be seen as topological features in the filtration appearing at entrance time $\rho_i$ and disappearing at $\rho_j$. 
The \define{persistence} $\rho_j-\rho_i$ of an interval is often interpreted as the significance of the observed topological feature. Consequently, intervals of the form $(\rho_i,\infty)$ can be seen as topological features which appear at scale $\rho_i$ and never disappear. 
We refer to~\cite{edelsbrunner2010computational,oudot2015persistence} for further details about
persistent homology and barcodes.

\paragraph{Good orders.}%
Let $K$ be a filtered simplicial complex. A pair $(\sigma,\tau)$ in $K$ is an \define{apparent pair} \cite{bauer2021ripser}, if $\sigma$ is the maximal facet of $\tau$ and $\tau$ is the minimal cofacet of $\sigma$ with respect to the total order on $K$. 
We say that $K$ is in \define{good order} if every interval in the persistence barcode of $K$ with zero persistence comes from an apparent pair. 
The good order property of a filtration is equivalent to the following criterion on its boundary matrix: 
No pivotal index is allowed to have the same entrance time as the column that contains it as a pivot. 
We will show that for Vietoris--Rips and \v{C}ech filtrations, we can always achieve a good order by lexicographically sorting rows and columns with the same entrance time (Lemma \ref{lem:concurrent_submatrices_good_orders}).  
In \cref{ex_good_order_crucial}, we show why having filtrations in good order is crucial for our results.

Not every filtration can be brought into good order, no matter how simplices with same entrance time are sorted. For an example, consider the triangulation of a dunce hat (see Figure \ref{fig:dunce_hat}) and its boundary matrix in dimension $1$.
Assume that all simplices have the same entrance time. Since every edge has $2$ or $3$ incident triangles, at least one column addition is performed, no matter what order is chosen. A zero column in the reduced matrix would imply that the $2$-nd Betti number is positive, but this contradicts the fact that the dunce hat is contractible

\begin{figure}[H]
    \centering
    \includegraphics[width=7cm]{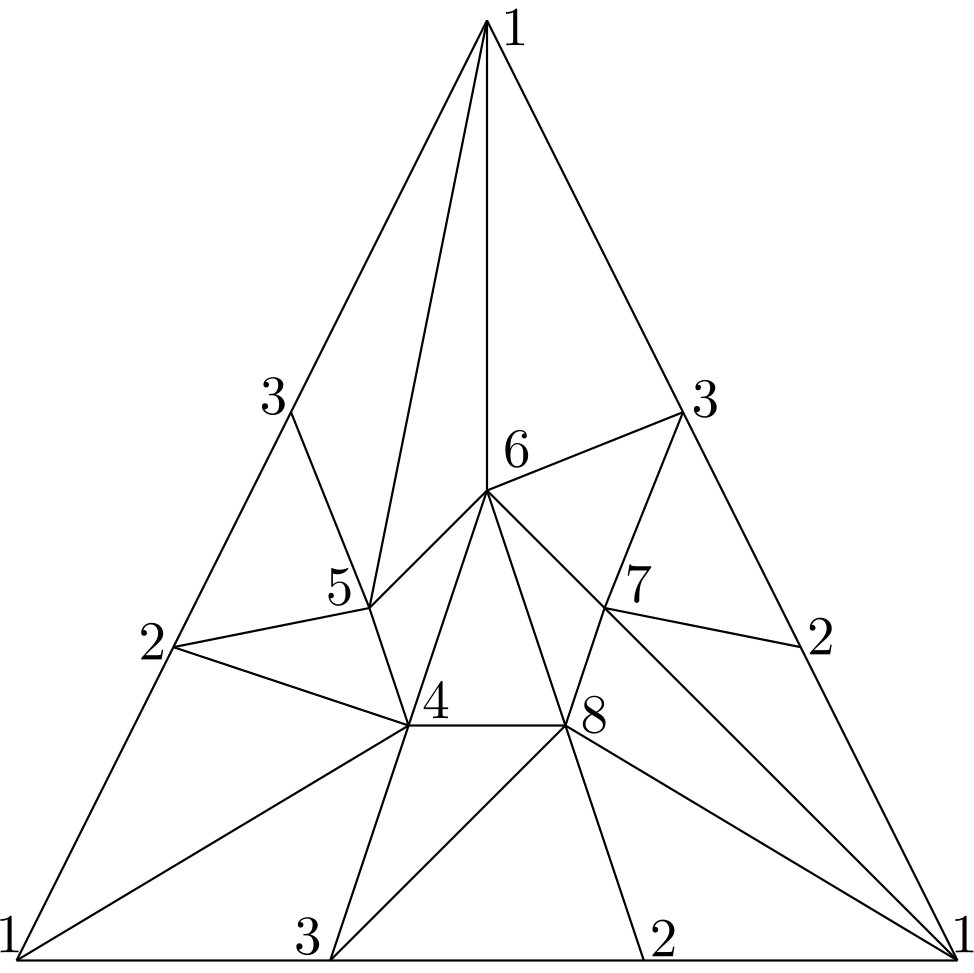}
    \caption{Triangulation of a dunce hat (see also Example $1.22$ in \cite{scoville2019discrete}).}
    \label{fig:dunce_hat}
\end{figure}

\section{Fill-in analysis}\label{Fill-in analysis}
From now on, we fix $K$ to be a complete filtered simplicial complex over $\ver$ vertices, yielding
a $k$-dimensional boundary matrix $\bdmat$ with $\row=\binom{\ver}{k+1}=\Theta(\ver^{k+1})$ rows and $\column=\binom{\ver}{k+2}=\Theta(\ver^{k+2})$ columns.  
Let $\bdmat'$ be the reduction of $\bdmat$. 
We have a simple lower bound on the fill-in of $D'$:
\begin{lemma}\label{lem:fillup_lower_bound}
If $D'$ is a reduction of a $k$-boundary matrix $D$, then
$\fillin\bdmat'\geq \binom{\ver}{k+1}-\binom{\ver}{k}=\Omega(n^{k+1})$.
\end{lemma}
\begin{proof}
Observe that $\fillin\bdmat'\geq\mathrm{rank}(\bdmat)$ because there
will be at least one non-zero entry at the pivot entries.
By the aforementioned formula of Betti numbers, we have
$\mathrm{rank}(\bdmat)=n_k-\betti_k(K)-\mathrm{rank}(\overline{\bdmat})$, where $\overline{\bdmat}$ denotes the boundary matrix in dimension $k-1$.
 The statement
follows because the Betti number of $K$ is $0$ in all dimensions $k\geq 1$ and $\overline{\bdmat}$ is a $(\binom{\ver}{k}\times\binom{\ver}{k+1})$-matrix whose rank is at most
$\binom{\ver}{k}$.
\end{proof}

We now turn to an upper bound for $\fillin\bdmat'$. By \cref{lem:fillup_staircase}, we get
\begin{equation}
\label{eqn:det_fillup}
\fillin \bdmat' \leq (k+2)\binom{n}{k+1} + \sum_{\text{$p$ is a pivotal index}} p
\end{equation}
because every column of $\bdmat$ has precisely $k+2$ non-zero entries and, since there are $\binom{n}{k+1}$ rows, i.e., possible pivots, there cannot be more constant columns than number of possible pivots.
We assume $k$ to be a constant,
in which case the first term further simplifies to $\Theta(n^{k+1})$. 
To bound the second term, two observations are necessary. 
First, it is possible to link the presence of a pivotal index $p$ with non-zero (k+1)-homology of $K_{\rho_p}$ (see Figure~\ref{fig:closing_edge}). 
Second, in all filtrations considered in what follows, non-zero homology of a complex in the filtration will turn out to be an unlikely event after some threshold radius, which makes the appearance of pivotal indices unlikely as well. 

\begin{lemma}\label{lem:critical_indices_nonzero_betti}
Let $D$ be the $k$-dimensional boundary matrix of a filtered simplicial complex in good order and let $D'$ be its reduction. If $(p,c_p)$ is a pivotal entry of $D'$, then $\beta_k(K_{\rho_p})>0$. 
\end{lemma}

\begin{proof}
Let $z$ be the cycle in $K_{\rho_p}$ which arises as a sum of the $k$-simplices associated to the non-zero entries of $D'_{c_p}$. After matrix reduction, the first $c_p$ columns of $D'$ are linearly independent, in particular, $D'_{c_p}$ can not be written as a linear combination of columns in $D$ with index smaller than $c_p$. This means that $z$ does not lie in the $k$-th boundary group of $K_{\rho_{c_p-1}}$. More precisely, since $D$ is in good order, $\rho_p<\rho_{c_p}$ and thus $z$ does not lie in the $k$-th boundary group of $K_{\rho_p}$. We conclude that $z$ is a non trivial cycle in $H_k(K_{\rho_p})$ and thus $\beta_k(K_{\rho_p})>0$.   
\end{proof}

\begin{figure}[H]
\centering
\begin{tikzpicture}
    \tikzstyle{point}=[circle,draw=black,fill=black,inner sep=0pt,minimum width=3pt,minimum height=3pt]
    \node (a0)[] at (-2.4,0.52) {};
    \node (a00)[] at (-1.3,0.52) {};
    \node (al0)[label={$\cdots$}] at (-2.6,0.2) {};
    \draw[right hook-latex] (a0) -- (a00);
    \node (a1)[point, label=above:{a}] at (-.9,1) {};
    \node (b1)[point, label=right:{c}] at (1.1,0) {};
    \node (c1)[point, label=above:{b}] at (0.9,1) {};
    \node (d1)[point, label=left:{d}] at (-1.1,0) {};
    \draw  (a1.center) -- (c1.center);
    \draw  (c1.center) -- (b1.center);
    \draw  (a1.center) -- (d1.center);
    \node (b0)[] at (1.5,0.52) {};
    \node (b00)[] at (2.6,0.52) {};
    \draw[right hook-latex] (b0) -- (b00);
    \node (a2)[point, label=above:{a}] at (3.1,1) {};
    \node (b2)[point, label=right:{c}] at (5.1,0) {};
    \node (c2)[point, label=above:{b}] at (4.9,1) {};
    \node (d2)[point, label=left:{d}] at (2.9,0) {};
    \draw  (a2.center) -- (c2.center);
    \draw  (c2.center) -- (b2.center);
    \draw  (a2.center) -- (d2.center);
    \draw  (b2.center) -- (d2.center);
    \node (c0)[] at (5.5,0.52) {};
    \node (c00)[] at (6.6,0.52) {};
    \draw[right hook-latex] (c0) -- (c00);
    \node (a3)[point, label=above:{a}] at (7.1,1) {};
    \node (b3)[point, label=right:{c}] at (9.1,0) {};
    \node (c3)[point, label=above:{b}] at (8.9,1) {};
    \node (d3)[point, label=left:{d}] at (6.9,0) {};
    \draw  (a3.center) -- (c3.center);
    \draw  (c3.center) -- (b3.center);
    \draw  (a3.center) -- (d3.center);
    \draw  (b3.center) -- (d3.center);
    \draw  (a3.center) -- (b3.center);
    \fill[gray, opacity=0.2] (a3.center) -- (b3.center) -- (d3.center) -- cycle; 
    \fill[gray, opacity=0.3] (a3.center) -- (b3.center) -- (c3.center) -- cycle; 
    \node (d0)[] at (9.5,0.52) {};
    \node (d00)[] at (10.6,0.52) {};
    \node (dl0)[label={$\cdots$}] at (10.8,0.2) {};
    \draw[right hook-latex] (d0) -- (d00);
\end{tikzpicture}
\caption{The insertion of the edge $cd$ - that has pivotal index as Figure \ref{fig:mat_stairs} shows - creates a $1$-cycle, i.e., increases $\betti_1$.}
\label{fig:closing_edge}
\end{figure}

In combination with (\ref{eqn:det_fillup}), the lemma gives
\begin{equation}
\label{eqn:betti_fillup}
\fillin \bdmat' \leq \Theta(n^{k+1}) + \sum_{i=0}^\row i\cdot\indicator_{(\betti_k(K_{\rho_i})>0)}(i),
\end{equation}
where $\indicator_{(\betti(K_{\rho_i})>0)}(-)$ is the indicator function taking the value $0$ for all $i=0,\dots,r$ such that the $k$-th homology
group of $K_{\rho_i}$ is trivial, and $1$ otherwise. 
In the worst-case, all these homology groups are non-trivial,
and the bound yields $\mathcal{O}(\row^2)$, which can also be derived directly as an upper bound
on $\fillin \bdmat'$. 
A better upper bound can be obtained in expectation if a sample with favorable properties is drawn from the set of filtered simplicial complexes in good order. 
We then have the following main lemma which bounds the expected value of the fill-in.

\begin{lemma}\label{lem:main_lemma}
Let $K$ be a random filtered simplicial complex in good order such that for given constants $T\geq 1$ and $A>0$,
\[
\mathbb{P}(\betti_k(K_{\rho_i})>0)<\frac{A}{\row}
\]
for all row indices $i>T$ (with $\row=\binom{\ver}{k+1}$ as before). Further, let $D$ be the boundary matrix of $K$ and $D'$ be its reduction. We then have that
\[
\mathbb{E}[\fillin\bdmat']=\mathcal{O}((1+A)n^{k+1}+T^2).
\]
\end{lemma}
\begin{proof}
We split the second term of (\ref{eqn:betti_fillup}) in two parts:
\begin{equation*}
\mathbb{E}\left[\sum_{i=0}^\row i\cdot\indicator_{(\betti_k(K_{\rho_i})>0)}\right]=\sum_{i=0}^{T-1} i\cdot\underbrace{\mathbb{P}(\betti_k(K_{\rho_i})>0)}_{\leq 1}+\sum_{i=T}^{\row} i\cdot\underbrace{\mathbb{P}(\betti_k(K_{\rho_i})>0)}_{\leq \frac{A}{\row}}\leq T^2 + \sum_{i=T}^{\row} A \leq T^2 + A\row \, .
\end{equation*}
The result follows by applying the latter inequality to the second term in the right-hand side of the inequality in (\ref{eqn:betti_fillup}) and taking the expectation.
\end{proof}

Before moving on to the next sections, where we will see that the assumption in this lemma holds for three examples of filtered simplicial complexes, we provide a simple example showing why the hypothesis of good order is crucial in our approach. 

\begin{example}\label{ex_good_order_crucial}
Consider the filtered simplicial complex depicted in \cref{fig:non_good_order}, with entrance times $s_0 < s_1 < s_2 < s_3 < t$.
\begin{figure}[H]
	\centering\tikzset{every picture/.style={line width=0.75pt}} 
	\begin{tikzpicture}  
		\tikzstyle{point}=[circle,draw=black,fill=black,inner sep=0pt,minimum width=3pt,minimum height=3pt]
		\node (a3)[point, label=above:{\textbf{a}}, label=left:{$s_0$}] at (7.1,1) {};
		\node (b3)[point, label=right:{\textbf{c}}, label=below:{$s_0$}] at (9.1,0) {};
		\node (c3)[point, label=above:{\textbf{b}}, label=right:{$s_0$}] at (8.4,1.1) {};
		\node (d3)[point, label=left:{\textbf{d}}, label=below:{$s_0$}] at (7.6,-0.6) {};
		\draw  (a3.center) -- (c3.center);
		\draw  (c3.center) -- (b3.center);
		\draw  (a3.center) -- (d3.center);
		\draw  (b3.center) -- (d3.center);
		\draw  (a3.center) -- (b3.center);
		\fill[gray, opacity=0.2] (a3.center) -- (b3.center) -- (d3.center) -- cycle; 
		\fill[gray, opacity=0.3] (a3.center) -- (b3.center) -- (c3.center) -- cycle; 
		\node (l0)[] at (9.05,0.7) {$s_2$};
		\node (l1)[] at (7.8,1.25) {$s_1$};
		\node (l3)[] at (8.45,-.6) {$s_3$};
		\node (l4)[] at (7.15,0.15) {$t$};
		\node (l5)[] at (8,0) {$t$};
		\node (l6)[] at (8.3,.7) {$t$};
		\node (l6)[] at (7.9,.8) {$t$};
		\node (K)[] at (5.5,0.52) {$K=$};
	\end{tikzpicture}
	\caption{A filtered simplicial complex $K$ over the vertices $\{a,b,c,d\}$, with the entrance times $s_0 < s_1 < s_2 < s_3 < t$ depicted near the corresponding simplices.}
	\label{fig:non_good_order}
\end{figure}

Choosing $ad<ac$, $acd<abc$, we obtain the following $1$-dimensional boundary and reduced matrices:

\begin{figure}[H]
	\centering
	\begin{blockarray}{ccc}
		& acd & abc  \\
		\begin{block}{c[cc]}
			ab &  &  1  \\
			bc &   & 1   \\
			cd & 1 &    \\
			ad & 1  &  \\
			ac & 1 & 1  \\
		\end{block}
	\end{blockarray}
	$\xrightarrow[\text{reduction}]{\text{left-to-right column}}$
	\begin{blockarray}{ccc}
		& acd & abc  \\
		\begin{block}{c[cc]}
			ab &  &  1  \\
			bc &   & 1   \\
			cd & 1 & 1   \\
			ad & 1 & \DoTikzmark{critind}{1} \\
			ac & \DoTikzmark{stepind}{1} &   \\
		\end{block}
	\end{blockarray}
	\colrow[red, opacity=.3]{stepind}{stepind}
	\colrow[green, opacity=.3]{critind}{critind}
	
	\caption{The $1$-dim boundary matrix of the filtration from Figure \ref{fig:non_good_order} and its reduction.}
	\label{fig:non_good_boundary}
\end{figure}

In this case, the pivotal index $ad$ has the same entrance time $t$ as its column $abd$, thus the order is not \textit{good}. 
The $1$-dimensional Betti number of $K_t$ is zero, which is problematic for our model as pivotal entries cannot be related here to non-zero homology. 
This in turn might lead to fill-in which cannot be controlled by homology. 
Note that the filtered simplicial complex in Figure \ref{fig:non_good_order} can be easily brought into good order by setting $ac<ad$ and $abc<acd$. 
In other words, if these simplices are ordered lexicographically according to the vertices's order, the filtration is in good order.
\end{example}

\section{Complexity for the \v{C}ech filtration}\label{section_cech}
We begin this section with the description of random \v{C}ech filtrations and show that these are in good order. 
Then we present a crucial result by Kahle \cite{kahle2011random} together with a probabilistic lemma, which shows that our random model suits the prerequisites of Lemma \ref{lem:main_lemma}. 
Finally, we state and prove the first main theorem of this paper, providing bounds on average fill-in and cost of matrix reduction of random \v{C}ech filtrations. 

\paragraph{\v{C}ech filtrations}
Given $n$-points $\mathcal{P}_n=\{p_1,\dots,p_n\}$, the \define{\v{C}ech complex at scale $\alpha$}, $\check{C}(\mathcal{P}_n,\alpha)$, is the simplicial complex with vertices $\mathcal{P}_n$ and all simplices $\sigma$ such that $\bigcap_{p_i\in\sigma}B_{\alpha}(p_i)\neq \emptyset$. 
The entrance time $\rho_\sigma$ (see Section \ref{subsec: Simplicial filtrations, boundary matrices and homology}) of a simplex $\sigma$ is then the smallest $\alpha$ such that $\sigma\in \check{C}(\mathcal{P}_n,\alpha)$. 
Equivalently, a $k$-simplex $\sigma=\left\{p_{i_0},\dots,p_{i_k} \right\}$ is in $\check{C}(\mathcal{P}_n,\alpha)$ whenever the smallest ball which contains the points of $\sigma$ has radius $\alpha$. 
We call this ball the \define{minimal enclosing ball of $\sigma$}, in short $\text{MEB}(\sigma)$. 
The $\text{MEB}$ of every simplex is unique \cite[Lemma 4.14]{de2013computational}. 
If this were not the case, then the ball centered in the midpoint $z$ of the two $\text{MEB}$ centers, having radius $\lVert z - p \rVert$ with $p$ an arbitrary point in the intersection of the boundaries, would contain $\sigma$. 
Moreover, this ball would have radius smaller than the MEB, a contradiction. 
The proof of the following lemma goes along the same lines. 

\begin{lemma}
\label{lem:MEB_same}
Let $\sigma\subseteq \tau \subseteq \mathbb{R}^d$ be simplices of a \v{C}ech complex having the same $\text{MEB}$-radius. Then $\text{MEB}(\sigma)=\text{MEB}(\tau)$. 
\end{lemma}

Letting the radius $\alpha$ of the balls range from zero, where $\check{C}(\mathcal{P}_n,0)$ equals $\mathcal{P}_n$, to the smallest value $\alpha_M$ such that $\check{C}(\mathcal{P}_n,\alpha_M)$ is the full simplicial complex,   yields a filtration of simplicial complexes, the \define{\v{C}ech filtration} $\check{C}(\mathcal{P}_n)$. 

\paragraph{Random \v{C}ech filtrations} 
We pick $n$-points $\mathcal{X}_n$ uniformly i.i.d. in the unit cube ${[-\frac{1}{2},\frac{1}{2}]}^d$ and build the \v{C}ech filtration $\check{C}(\mathcal{X}_n)$ on these points, ordering the simplices
\begin{enumerate}[label=(\roman{enumi})]
\item by their entrance time, 
\item then by their dimension,
\item then by their lexicographic order induced by the total order on the vertices $\mathcal{X}_n$ (which is given by their indices).
\end{enumerate}
This order is called a $\rho$-lexicographic order $\leq_\rho$. 
The introduced filtration of simplicial complexes $\check{C}(\mathcal{X}_n)$ is the \define{\v{C}ech filtration model}. 
The $\rho$-lexicographic order is indeed a good order on $\check{C}(\mathcal{X}_n)$ as we will see in the next paragraph.
Notice that using e.g. radix sort, a lexicographic order on the $k$-simplices can be obtained in $\mathcal{O}(k \binom{\ver}{k+1})$ time. Similarly, the $(k+1)$-simplices can be ordered in $\mathcal{O}((k+1)\binom{\ver}{k+2})=\mathcal{O}(n^{k+2})$ time. Thus, the time complexity of ordering a $k$-dimensional boundary matrix is $\mathcal{O}(n^{k+2})$ and therefore negligible in view of the overall complexity of matrix reduction.

\paragraph{Proof of good order.}
We assume for simplicity that the point set $\mathcal{X}_n$ is in generic position, meaning that whenever $\sigma$ and $\tau$ have different $\text{MEB}s$, their $\text{MEB}s$ have different radii. 
In particular, this implies that all edges have different lengths. 
This property is indeed generic since it is ensured by a random perturbation of the points, and hence the probabilty that a non-generic $\mathcal{X}_n$ is sampled is $0$.

If we restrict our view to the vertices, edges and triangles of $\check{C}(\mathcal{X}_n)$, it is not necessary to sort $\rho$-lexicographically to achieve a good order. 
Since each edge has a different entrance time, the order on edges is canonical. 
It then suffices to resolve tie-breaks on triangles with the same entrance time arbitrarily. 
Indeed any pivotal column of the boundary matrix undergoes at least one reduction step. 
Since every edge has a different entrance time, this means that the entrance time of a pivotal index is always strictly smaller than the entrance time of its column. 

To prove that the $\rho$-lexicographic order on $\check{C}(\mathcal{X}_n)$ is a good order, we then consider boundary matrices in arbitrary dimension. 
It suffices to limit our view to concurrent submatrices. 
A \define{concurrent submatrix} $C\in\mathbb{Z}_2^{\row'\times\column'}$ of a $k$-dimensional boundary matrix $D\in\mathbb{Z}_2^{\row\times\column}$ of $\check{C}(\mathcal{X}_n)$ is the restriction of $D$ to a set of successive rows and columns which have the same entrance time. 
If matrix reduction yields no pivotal index in any concurrent submatrix of any boundary matrix, then the whole filtration is in good order. 
As the following lemma shows, this statement is fulfilled and hence $\leq_\rho$ is a good order on $\check{C}(\mathcal{X}_n)$.

\begin{lemma}
\label{lem:concurrent_submatrices_good_orders}
Let $C\in\mathbb{Z}_2^{r\times c}$ be a $k$-dimensional concurrent submatrix of a \v{C}ech filtration. Then matrix reduction either leaves a column unchanged or it reduces that column to zero. 
\end{lemma}

In our case, the function which maps each simplex of a \v{C}ech filtration to its $\text{MEB}$-radius is a generalized discrete Morse function \cite{bauer_morse_cech_delaunay}. Therefore, the proof of Lemma \ref{lem:concurrent_submatrices_good_orders} immediately follows from Lemma $9$ in \cite{br-apparent}. The fact that $\rho$-lexicographic orders are good orders can be stated more generally using discrete Morse theory, see \cite{br-apparent}. We nevertheless present an elementary proof for the purpose of self-containment.

\begin{proof}[Proof of Lemma \ref{lem:concurrent_submatrices_good_orders}]
By our genericity assumption, every simplex associated to a row or column in $C$ contains the same $l$-dimensional face $\xi$ with $l<k$. We start by removing the vertices of $\xi$ from these simplices. 
This operation preserves the order on rows and columns and thus keeps $C$ unchanged.  

Assume that two columns $\nu\leq_\rho\tau$ get added during matrix reduction, that is, they have the same pivot $\sigma$. 
Let $x$ and $y$ be the single elements contained in $\nu\setminus\sigma$ and $\tau\setminus\sigma$, respectively. 
Then $x$ is smaller than $y$ in the $\rho$-lexicographical order since $\nu$ and $\tau$ have $\sigma$ as pivot. 
Further, the smallest vertex in $\nu$ has to be $x$ and the smallest element in $\tau$ has to be $y$ because otherwise $\sigma$ could not be the common pivot of these simplices.  

The $k-l+1$ non-zero entries in column $\tau$ are indexed by the $(k-l)$-element subsets $\sigma_1 < \cdots <\sigma_{k-l+1}=\sigma$ of $\tau$. Now both $\nu\cup \xi$ and $\tau \cup \xi$ share the same minimal enclosing ball by Lemma \ref{lem:MEB_same}. 
Therefore, all $(k-l+1)$ element subsets of $\nu \cup \tau$ correspond to columns in $C$. 
More precisely, there are columns in $C$ indexed by $x$ concatenated by $\sigma_i$ and we write such a column as $x*\sigma_i$. The pivot of these columns is $\sigma_i$ because $\sigma_i\subseteq \tau$ and $x$ is strictly smaller than $y$, the smallest element in $\tau$. 
Therefore, adding the columns $x*\sigma_1<\cdots <x*\sigma_{k-l+1}$ to $\tau$ reduces the non-zero entries at $\sigma_1<\cdots < \sigma_{k-l+1}$. 
It remains to show that after these reductions, no non-zero entry appears in $\tau$ in a row smaller than $\sigma_1$. 
The rows in question are precisely indexed by the concatenations of $x$ with the $(k-l-1)$-element subsets $\eta_i$ of $\tau$. 
Since $\lvert(\nu\cup \tau )\backslash (x\cup\eta_i) \rvert=2$ there are exactly two columns in $\{x*
\sigma_1,\cdots,x*\sigma_{k-l+1}\}$ which have a non-zero entry in row $x*\eta_i$. Hence it can be concluded that the column indexed by $\tau$ gets reduced to zero. 
\end{proof}

\paragraph{Probabilistic results.}

Recall from Lemma \ref{lem:critical_indices_nonzero_betti} that pivotal indices in a good order witness non-trivial homology groups. 
However, non-trivial homology becomes unlikely at larger scales. 
The following result is due to Kahle \cite[Theorem 6.1]{kahle2011random}; 
we restate it taking into account the speed of convergence.
See Appendix~\ref{A_lemma_C} for a proof.

\begin{lemma}
\label{lem:CentralLemmaB_CechCase}
Given positive integers $\ell$ and $k$, there exists a constant $c>0$ such that for  $\alpha \geq c {\left(\frac{\log(n)}{n}\right)}^{1/d}$:
\begin{equation}
\label{eq:CentralEqB_CechCase}
\mathbb{P} \left( \beta_k(\check{C}(\mathcal{X}_n,\alpha)) \neq 0 \right) \leq \frac{1}{n^\ell}
\end{equation}
for sufficiently large $n$. 
\end{lemma}

We set $\alpha^*\coloneqq c{\left(\frac{\log n}{n}\right)}^{1/d}$ and claim that for sufficiently large row indices $i$, it is unlikely that the entrance time $\rho_i$ is smaller than $\alpha^*$. This statement is a direct consequence of the next technical lemma. 

\begin{lemma}
\label{lem:SecondLemmaB_CechCase}
Let $N_k=N_k(\mathcal{X}_n)$ be the number of $k$-simplices in $\check{C}(\mathcal{X}_n,\alpha^*)$. 
Then for any $\ell\in\mathbb{N}$ there exists a constant $\lambda >0$ such that we have
\begin{equation}
	\label{eq:conc_ineq}
	\mathbb{P}\left(N_k\geq \lambda n \log^k n\right) \leq \frac{1}{n^\ell},
\end{equation}
for sufficiently large $n$. 
\end{lemma}

We postpone the proof of the two lemmas above to Appendix \ref{A_lemma_C}. All results needed to prove the main theorem of this section are at hand now. 

\begin{mthm}
\label{thm:Main_thm_Cech}
Let $D'$ be the reduced $k$-dimensional boundary matrix of the \v{C}ech filtration $\cechfilt$. Then 
\begin{equation*}
\mathbb{E}[\# D']=\mathcal{O}(n^{2}\log(n)^{2k}+n^{k+1})
\end{equation*}
and the cost of matrix reduction is bounded by
$\mathcal{O}(n^{k+4}\log(n)^{2k}+ n^{2k+3})$.
\end{mthm}

\begin{proof}
By \cref{lem:CentralLemmaB_CechCase} there exists a constant $c>0$ such that $\mathbb{P} \left( \beta_k(\check{C}(\mathcal{X}_n,\alpha)) \neq 0 \right) \leq \frac{1}{n^{k+1}}$ for $\alpha\geq c {\left(\frac{\log n}{n}\right)}^{1/d}\coloneqq\alpha^*$.
Depending on $\alpha^*$, there exists a $\lambda>0$ such that $\mathbb{P}(N_k\geq \lambda n \log^k n)\leq \frac{1}{n^{k+1}}$. 
We set $T\coloneqq\lambda n \log^k n$. 
For every row index $i>T$ we have that $\rho_i\leq \alpha^*$ implies that there must be at least $T$ $k$-simplices with entrance time smaller than $\alpha^*$, i.e. $\mathbb{P}(\rho_i\leq \alpha^*)\leq \mathbb{P}(N_t\geq T)\leq \frac{1}{n^{k+1}}$. Putting everything together yields
\begin{equation*}
\begin{split}
\Prob{\beta_k(\check{C}(\mathcal{X}_n,\rho_i)> 0} & =\Prob{\beta_k(\check{C}(\mathcal{X}_n,\rho_i)) > 0\wedge \rho_i>\alpha^*}+\Prob{\beta_k(\check{C}(\mathcal{X}_n,\rho_i)) > 0\wedge \rho_i\leq\alpha^*} 
\\
&\leq \Prob{ \beta_k(\check{C}(\mathcal{X}_n,\rho_i))\neq 0\mid \rho_i>\alpha^*} + \Prob{\rho_i\leq\alpha^*}
\\
&\leq \frac{1}{n^{k+1}}+\frac{1}{n^{k+1}}.
\end{split}
\end{equation*}

We have shown that the conditions of \cref{lem:main_lemma} are met, and thus the statement about the expected fill-in follows from that result. 
With the expected fill in at hand, the cost of matrix reduction is a consequence of \cref{lem:cost_bound} as the number of columns is $\Theta(n^{k+2})$. 
\end{proof}

In dimension $1$, our bound simplifies to $\mathbb{E}[\# D']=\mathcal{O}(n^{2}\log^{2}n)$ and implies that the reduced matrix has fewer entries in expectation than the unreduced boundary matrix which has precisely $3\binom{n}{3}=\Theta(n^3)$ non-zero entries. 
Moreover, since \cref{lem:fillup_lower_bound} implies that
the expected fill-in cannot be smaller than quadratic in $n$, our bound is tight up to a factor of $\log^2n$. 
Even stronger is the case $k>1$: the second summand is asymptotically dominant, i.e. we obtain $\mathcal{O}(n^{k+1})=\mathbb{E}[\# D']\geq \Omega(n^{k+1})$ by \cref{lem:fillup_lower_bound}. 
Therefore, our fill-in bound is tight for $k>1$. 

\section{Complexity for the Vietoris--Rips filtration}\label{section_rips}
The organization of this section is analogous to Section \ref{section_cech}. 
As clique filtrations, Vietoris--Rips (in short VR) filtrations are more combinatorial in nature than \v{C}ech filtrations, but their boundary matrices are very similar regarding our scope. We will thus point out differences to Section \ref{section_cech} rather then justify each step again. 

\paragraph{Vietoris--Rips filtrations.} 
Given $n$ points $\mathcal{P}_n=\{p_1,\dots,p_n\}\subseteq{[-\frac{1}{2},\frac{1}{2}]}^d$, the \define{Vietoris--Rips complex at scale $\alpha$}, $\text{VR}(\mathcal{P}_n,\alpha)$ is the simplicial complex with vertices $\mathcal{P}_n$ and all simplices $\sigma$ such that $B_{\alpha}(p_i)\cap B_{\alpha}(p_j)\neq \emptyset$ for all $p_i,p_j\in\sigma$, i.e. the diameter of $\sigma$ is at most $\alpha$. Letting the scope $\alpha$ range from zero, where $\text{VR}(\mathcal{P}_n,0)=\mathcal{P}_n$, to $\alpha_M$ such that $\text{VR}(\mathcal{P}_n,\alpha_M)$ is the full complex yields a nested sequence of simplicial complexes, the \define{Vietoris--Rips filtration} $\text{VR}(\mathcal{P}_n)$.

\paragraph{Clique filtrations.} 
Vietoris--Rips filtrations are a special case of so called clique filtrations. For a filtered simplicial complex $K$ with ordered $k$-simplices $\sigma_1,\ldots,\sigma_\row$, we define the \define{$k$-clique complex} $K_i$ with $0\leq i\leq\row$ as the largest subcomplex of $K$ that contains
exactly $\sigma_1,\ldots,\sigma_i$ as $k$-simplices. Note that each $K_i$ necessarily contains all $\ell$-simplices
of $K$ with $\ell<k$, but that is not the case for simplices of dimension $\geq k$. It holds that $K_i\subseteq K_j$ for $i<j$ and we thus have the \define{$k$-clique filtration} $K_0\subseteq K_1 \subseteq \cdots \subseteq K_r=K$. The filtration $\text{VR}(\mathcal{P}_n)$ is therefore a $1$-clique filtration of the full simplicial complex on $n$ vertices. For its construction, it suffices to have all pairwise distances between the points in $\mathcal{P}_n$. 

Boundary matrices $D$ of clique filtrations generically appear in staircase shape. That is, for any $j<i$ such that $D_i$ and $D_j$ are non-zero, the pivot of $D_j$ is not larger than the pivot of $D_i$. In this case the constant columns are easily identifiable as the ones with smallest index given their pivot. These column-pivot pairs are apparent pairs. 

\paragraph{Random Vietoris--Rips filtrations.}
As in the \v{C}ech case, $n$-points $\mathcal{X}_n=\{X_1,\dots,X_n\}$ are picked uniformly i.i.d. in the unit cube $[-\frac{1}{2},\frac{1}{2}]^d$. We now sort the vertices in $\mathcal{X}_n$ by ascending distance to the origin. 
Building the Vietoris--Rips filtration on $\mathcal{X}_n$ such that the simplices are ordered in $\rho$-lexicographic order yields the \define{Vietoris--Rips filtration model} $\VRfilt$. This is again a random filtered simplicial complex in good order.

\paragraph{Proof of good order.} As in Section \ref{section_cech} all edges in $\mathcal{X}_n$ are assumed to have different length. 
Thus, each edge gets added to $\VRfilt$ at a different entrance time, and each triangle gets added to $\VRfilt$ together with its longest edge. Ordering the edges in the natural way and resolving tie-breaks between triangles arbitrarily yields a staircase shaped $1$-dimensional boundary matrix $D$. 
In this case it is easy to see that the restriction of $\VRfilt$ to simplices of dimension $\leq 2$ is in good order. 

That $\VRfilt$ is in good order also in higher degrees can be seen by similar arguments as in the section on \v{C}ech filtrations. 
For the proof of the analogon of Lemma \ref{lem:concurrent_submatrices_good_orders}, note the following two properties of Vietoris--Rips filtrations. 
First, each $k+1$ simplex with entrance time $\rho$ has to contain the edge with the same entrance time. 
This holds due to the genericity assumption on edges and because each higher dimensional simplex enters at the same time as its latest edge. 
For the second property, assume that all $(k+1)$-simplices with entrance time $\rho$ share an $l$-dimensional face $\xi$ and two $(k+1)$-simplices $\tau$ and $\nu$ with entrance time $\rho$ share a $\leq_\rho$-maximal facet $\sigma$. 
Then the distance between the vertices $\tau\backslash \sigma$ and $\nu\backslash \sigma$ is smaller than $\rho$ since the vertices were ordered by distance to the origin. 
This means that the $(k+2)$-simplex $\tau\cup \nu$ has also entrance time $\rho$ and, in particular, that all $(k-l+1)$ element subsets of $\tau \cup \nu$ union $\xi$ have also entrance time $\rho$. \qed

As in the \v{C}ech case, the high-level reason why the $\rho$-lexicographic order is a good order is that, under the given genericity assumption on edges, the map assigning each simplex its entrance time is a generalized discrete Morse function \cite{br-apparent}.

\paragraph{Probabilistic results.} 

Again, a theorem of Kahle, \cite[Theorem 6.5]{kahle2011random} comes in handy. We restate it in order to include the speed of convergence. The proof is included in Appendix \ref{A_lemma_VR}.   
\begin{lemma}
\label{lemma_rips_beta}
Given positive integers $\ell$ and $k$, there exists a constant $c>0$ such that for  $\alpha \geq c {\left(\frac{\log(n)}{n}\right)}^{1/d}$:
\begin{equation}
\label{eq:CentralEqB_RipsCase}
\mathbb{P} \left( \beta_k(\text{VR}(\mathcal{X}_n,\alpha)) \neq 0 \right) \leq \frac{1}{n^\ell}
\end{equation}
for sufficiently large $n$. 
\end{lemma}

There is no need to prove a Vietoris--Rips version of Lemma \ref{lem:SecondLemmaB_CechCase}, as each \v{C}ech complex $\cechmod$ is a subcomplex of $\VRcompl$. We have the same main theorem as in the \v{C}ech case.

\begin{mthm}\label{mainthm_VR}
Let $D'$ be the reduced $k$-dimensional boundary matrix of the Vietoris--Rips filtration $\VRfilt$. Then 
\begin{equation*}
\mathbb{E}[\# D']=\mathcal{O}(n^{2}\log(n)^{2k}+n^{k+1})
\end{equation*} and the cost of matrix reduction is bounded by $\mathcal{O}(n^{k+4}\log(n)^{2k}+ n^{2k+3})$.
\end{mthm}

\begin{proof}
Similar to Theorem \ref{thm:Main_thm_Cech}. 
\end{proof}

\section{Complexity for the  Erd\H{o}s--R\'enyi  filtration}\label{section_erdos_renyi}
The last considered model does not arise from an underlying random point cloud and is therefore purely combinatorial. It shares the property of being a clique filtration with the Vietoris--Rips filtration and so this section will be structured similarly to the last one. 

\paragraph{Random Erd\H{o}s--R\'enyi filtrations.} Fix $n$ possible points and pick a real value in $[0,1]$ uniformly at random for each possible edge. The \define{random Erd\H{o}s--R\'enyi complex at scale $\alpha$}, $\text{ER}(n,\alpha)$ is the largest subcomplex of the full simplicial complex on $n$ vertices which contains all edges of length at most $\alpha$. The subcomplex of $\ERcompl$ consisting of all vertices and edges is the Erd\H{os}--R\'enyi graph $G(n,\alpha)$ where each edge is included independently with probability $\alpha$. Letting the scale $\alpha$ range from $0$ to $1$  yields the \define{Erd\H{o}s--R\'enyi filtration model $\ERfilt$}.  

Notice that the probability that two edges have the same length is zero. It is therefore of no harm to assume that each edge in $\ERfilt$ has different length. 
Since the Erd\H{o}s--R\'enyi filtration is a clique filtration, the order on edges and triangles induced by their entrance times yields a good order, even if arbitrary tie-breaks are performed. 
We will only consider $1$-dimensional boundary matrices since, to the best of our knowledge, no probabilistic results for higher dimensions are available. 

\paragraph{Probabilistic results.} As in the preceding two sections, a bound on $\beta_1(\ERcompl)$ is necessary. 
\begin{lemma} \label{lemma_er_beta}
There are constants $\kappa > 0$ and $c > 0$ such that if $\alpha > c \cdot \sqrt{\frac{\log\ver}{\ver}}$
\begin{equation*}
    \Prob{\betti_1(\ERcompl) > 0} < \kappa \cdot \ver^{- 4}\, .
\end{equation*}
\end{lemma}
This result is almost given in ~\cite[Theorem 1.2]{demarco2013triangle}~: they show that for $c=\frac{3}{2}$, $\betti_1(X)=0$ with high probability
(i.e., the probability goes to $1$ when $n$ goes to $\infty$). 
We will need the stated speed of convergence for our proof, and the proof of~\cite{demarco2013triangle}
yields this guarantee (with a constant larger than $3/2$). However, proving this requires us to go through a large part of the technical details
of that paper. 
We defer to~\cref{A_lemma_ER} for these details.

\cref{lemma_er_beta} is not quite sufficient to bound the probability of $\betti_1(\text{ER}(n,\rho_i)$ being non-zero. The reason is that $\text{ER}(n,\rho_i)$ is dependent on the entrance time of the $i$-th edge, not merely on the scope $\alpha$. However, we can derive a (crude) bound to relate the two concepts:
\begin{lemma}\label{lem:gnp_gnm_bound}
Let $\row=\binom{n}{2}$ and $\alpha=\frac{i}{\row}$. Then
\[
\ver^2\cdot\Prob{\betti_1(\ERcompl) > 0} \geq \Prob{\betti_1(\text{ER}(n,\rho_i) > 0} \, .
\]
\end{lemma}
\begin{proof}
We have that 
\begin{align*}
&\Prob{\betti_1(\ERcompl)> 0} \geq \Prob{(\betti_1(\ERcompl)> 0)\wedge \ERcompl\text{ has exactly i edges}}
\\
& = \Prob{(\betti_1(\ERcompl)> 0)| \ERcompl \text{ has exactly i edges}}\cdot\Prob{\ERcompl\text{ has exactly i edges}} \, .
\end{align*}
We claim that the first factor is equal to $\Prob{\betti_1(\text{ER}(n,\rho_i)> 0}$. 
Indeed, under the condition of having exactly $i$ edges, the underlying Erd\H{o}s--R\'enyi graph of $\ERcompl$ is just drawn uniformly at random among all $i$-edge graphs with $\ver$ vertices (because of the symmetry of the Erd\H{o}s--R\'enyi model), just as in the $i$-th complex of $\ERfilt$.

For the second factor, we observe that the number of edges is a binomial distribution whose expected value is equal to the integer $\row\cdot \alpha=i$.
It is known that, in this case, the probability is maximized at the expected value, see for instance~\cite{kaas1980mean}.
Hence, since there are $\row+1\leq \ver^2$ possible values for the distribution, we have that $\Prob{\ERcompl\text{ has exactly i edges}}\geq 1/\ver^2$.
\end{proof}
Combining these two statements with \cref{lem:main_lemma} yields our third main theorem:
\begin{mthm}\label{mainthm_er}
Let $\bdmat'$ be the matrix reduction of the $1$-dimensional boundary matrix of an Erd\H{o}s--R\'enyi filtration. Then
\[
\mathbb{E}[\fillin\bdmat']=\mathcal{O}(\ver^3\log \ver)
\]
and the cost of the matrix reduction is bounded by $\mathcal{O}(\ver^6\log \ver)$.
\end{mthm}
\begin{proof} Choose $\kappa$ and $c$ as in \cref{lemma_er_beta} and set $T\coloneqq c\cdot \row\sqrt{\frac{\log \ver}{\ver}}$. 
For every $i>T$, we have that $\alpha\coloneqq\frac{i}{\row}>c\sqrt{\frac{\log \ver}{\ver}}$.
Using \cref{lemma_er_beta} and \cref{lem:gnp_gnm_bound}, we can thus bound
\[
\Prob{\betti_1(\text{ER}(n,\rho_i) > 0}\leq \ver^2 \cdot \Prob{\betti_1(\ERcompl) > 0} \leq \frac{\kappa}{\ver^2}<\frac{\kappa}{\row}\, .
\]
Hence, the hypothesis of \cref{lem:main_lemma} is satisfied using the chosen $T$ and $A\coloneqq\kappa$.
It follows that
\[
\mathbb{E}[\fillin\bdmat']=\mathcal{O}((1+\kappa)\ver^2+\row^2\frac{\log \ver}{\ver})=\mathcal{O}(\ver^3\log \ver)
\]
proving the first part of the statement. 
The second part follows by \cref{lem:cost_bound} since the number of columns is $\mathcal{O}(\ver^3)$.
\end{proof}

\section{Comparison with experimental results}\label{S_experiments}

We ran experiments to compare the empirical outcome with our bounds
for fill-in and cost. For each filtration type, we generated $100$ random filtrations
for every considered value of $\ver$ and reduced their $1$-dimensional boundary matrices. 
We display the average fill-in and number of bitflips which corresponds to the cost of matrix reduction. 
We choose the bitflips as a proxy of complexity since, as discussed in \cref{subsec:Matrix_reduction}, the cost of the operation $M_i\leftarrow M_i+M_j$ can be realized using data structures for sparse matrices in $\mathcal{O}(\fillin M_j\log\fillin M_i)$ time, which matches bitflips up to a logarithmic factor.
Moreover, in all experiments, we use linear regression
on a log-log-scale to calculate the values $a$ and $b$ such
that the plot is best approximated by the curve $b n^a$.
Similar experiments have been performed in~\cite{SchreiberThesis}.

Figure~\ref{fig:cech_plot} shows the results for
\v{C}ech filtrations. For the fill-in (left figure), we observe
an empirical fill-in of $\Theta(n^{2.027})$
which is quite expected because of our upper bound of $\mathcal{O}(n^2\log^2)$ and a matching lower bound of $\mathcal{O}(n^2)$.
The cost (right figure) follows a curve of around $\Theta(n^{3.8})$ which is far from our upper bound
of $\mathcal{O}(n^5\log^2 n)$, suggesting that our bound on the cost is not tight.
This is perhaps not surprising because
our bound on the cost is based on the (pessimistic) assumption that the reduction of a column needs to add all previously reduced columns
of the matrix to it (see the proof of \cref{lem:cost_bound}). 
A tighter upper bound for the cost would have to improve on this part of the argument. 

The same reasoning holds true for the results for the Vietoris--Rips filtration (see Figure~\ref{fig:rips_plot}). 

\begin{figure}[H]
    \centering
    \includegraphics[width=7cm]{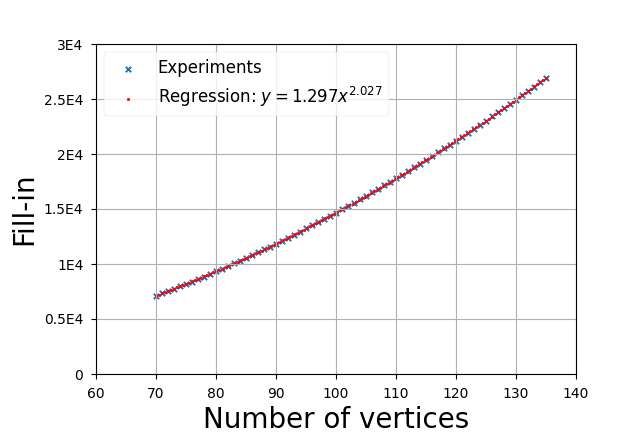}
    \includegraphics[width=7cm]{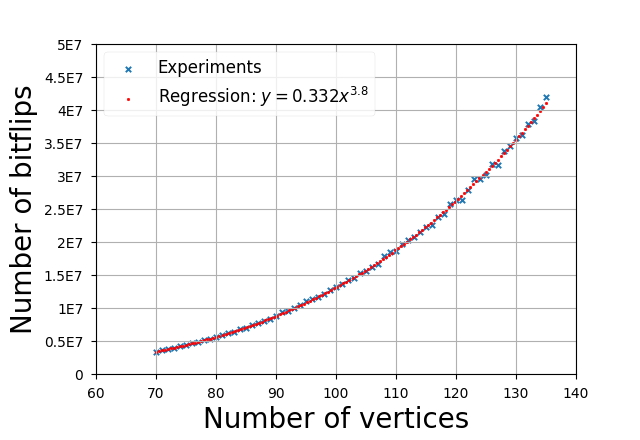}
    \caption{Average fill-in (left) and cost (right) for the reduction of the \v{C}ech filtration of a random point set sampled uniformly in $[-\frac{1}{2}, \frac{1}{2}]^3$. The regression coefficients are shown in the figure.}
    \label{fig:cech_plot}
\end{figure}

\begin{figure}[H]
    \centering
    \includegraphics[width=7cm]{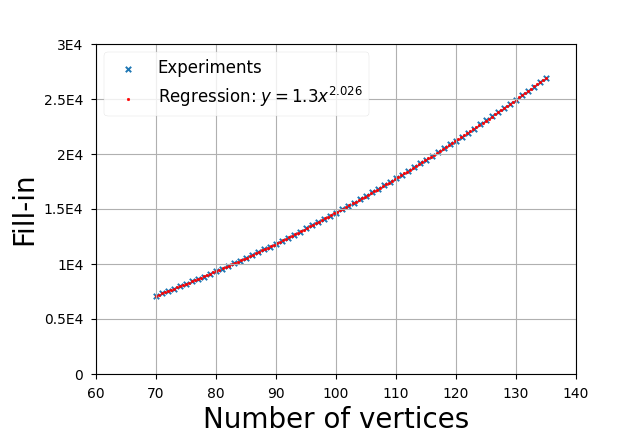}
    \includegraphics[width=7cm]{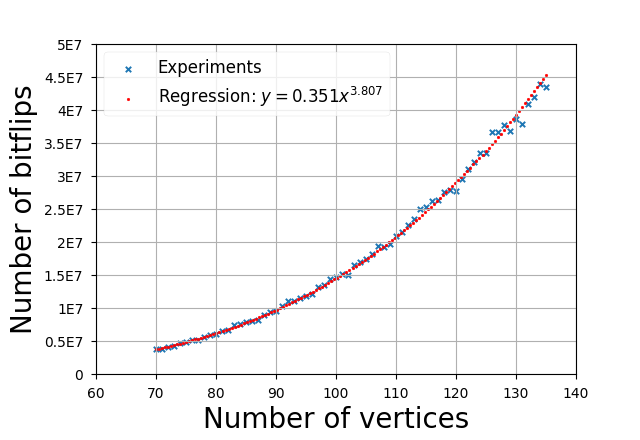}
    \caption{Average fill-in (left) and cost (right) for the reduction of the Vietoris--Rips filtration of a random point set sampled uniformly in $[-\frac{1}{2}, \frac{1}{2}]^3$. The regression coefficients are shown in the figure.}
    \label{fig:rips_plot}
\end{figure}

Figure~\ref{fig:er_plots} shows the results
for Erd\H{o}s--R\'enyi filtrations.
The regression yields an observed complexity of around $\mathcal{O}(n^{2.093})$ for the fill-in and $\mathcal{O}(n^{5.084})$ for the cost, which are quite far from
our upper bounds of $\mathcal{O}(n^3\log n)$ and $\mathcal{O}(n^6\log n)$, respectively.
Note that in the proof of Lemma~\ref{lem:main_lemma}, we assume that
all columns with a pivot smaller than the threshold $T$ are dense,
and we use the rather large value of $T=\Theta(\sqrt{n^3\log n})$ in the proof of Main Theorem~\ref{mainthm_er}. We speculate that a tighter bound has to
analyze the behavior in this ``subcritical regime'' more carefully
(a possible approach for that might be to use techniques from \cite{order_pivoting} to find a probabilistic bound on the density of the columns).
On the other hand, it is perhaps surprising that the empirical cost seems bigger than the empirical fill-in by a factor
very close to $n^3$. 
That suggests that, unlike in the Vietoris--Rips case, Lemma~\ref{lem:cost_bound} is not too pessimistic in bounding the cost of the reduction in this case.
\begin{figure}[H]
    \centering
    \includegraphics[width=7cm]{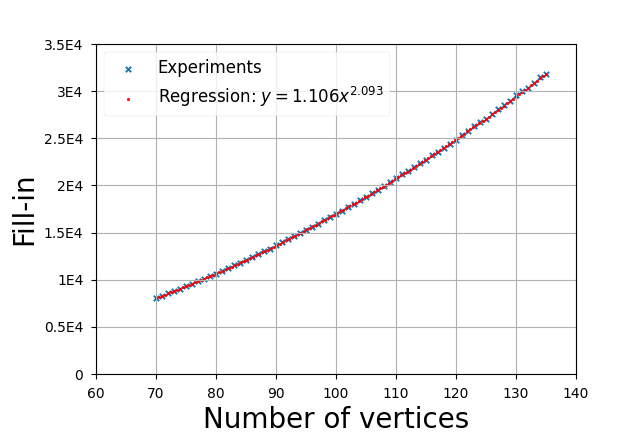}
    \includegraphics[width=7cm]{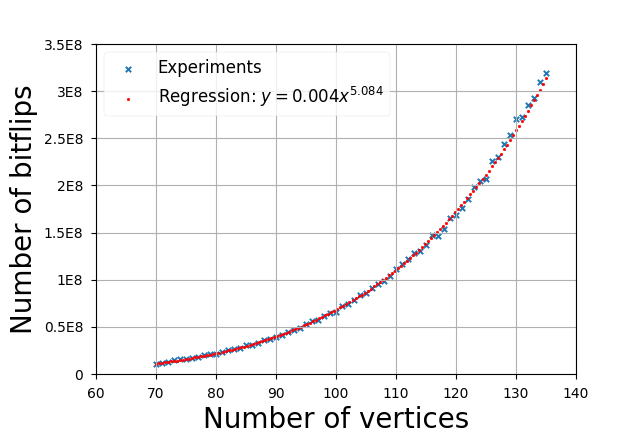}
    \caption{Average fill-in (left) and cost (right) for the reduction of the Erd\H{o}s--R\'enyi  filtration. The regression coefficients are shown in the figure.}
    \label{fig:er_plots}
\end{figure}

\section{Worst-case fill-in and complexity}\label{S_worst_case}
Our upper bounds on average fill-in and cost are smaller than the respective worst-case estimates.
However, these estimate are based on the assumption
that the reduction algorithm produces dense columns
(since the fill-in of a column with pivot $i$ is upper bounded with $i$).
Since the boundary matrix initially has only a constant number of non-zero entries per column, the question is whether such a bound is really achieved in an example, or whether the upper bound is not tight. 

Even for general boundary matrices of simplicial complexes, it requires some care to generate just one dense column in the reduced matrix. 
For the worst-case, however, one has to generate many such columns (to achieve the worst-case fill-in), and ensure that these columns get used in the reduction of subsequent columns (to achieve the worst-case cost). 
This has been done by Morozov~\cite{morozov2005persistence} for general simplicial complexes. 
However, restricting to clique complexes puts additional constraints and invalidates his example. 
In this section we show the following result:

\begin{theorem}
\label{thm:er_worst_case}
For every $n$, there is a clique filtration over $n$ vertices, for which the
left-to-right reduction of the $1$-boundary matrix has a fill-in of $\Theta(n^4)$ and a cost of $\Theta(n^7)$.
\end{theorem}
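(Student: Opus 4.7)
The plan is to produce an explicit clique filtration on $n$ vertices and then verify by direct combinatorial analysis that its reduced $1$-boundary matrix has fill-up $\Theta(n^4)$ and that the reduction has cost $\Theta(n^7)$. The matching upper bounds are immediate: by \cref{lem:fillup_staircase}, the fill-up is at most $3\binom{n}{2} + \sum_{p \text{ critical}} p = O(n^4)$ because the critical pivots are distinct row indices in $\{1,\dots,\binom{n}{2}\}$, and \cref{lem:cost_bound} then gives cost $O(n^7)$. The task is to realise both bounds simultaneously in a single example.

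The construction I would try partitions the vertices as $V = A \sqcup B$ with $|A|=|B|=n/2$ and orders the edges in two blocks: first all $\Theta(n^2)$ bipartite edges $A$-$B$ in a carefully chosen order, then all intra-part edges $A$-$A$ and $B$-$B$. Throughout the first block the complex is triangle-free, so no column enters the boundary matrix; at the end of this block the $1$-skeleton is $K_{n/2,n/2}$ with $\betti_1=\Theta(n^2)$. In the second block, each newly added intra-part edge $a_ia_j$ triggers the simultaneous appearance of $\Theta(n)$ triangles $\{a_i,a_j,b\}$ that all share $a_ia_j$ as pivot; only one becomes a step column and the remaining $\Theta(n)$ reduce by pairwise subtraction to columns whose boundaries are $4$-cycles of bipartite edges. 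The key point is to order the bipartite edges of the first block so that, for $\Theta(n^2)$ of these reduced columns, further reductions against later-arriving triangles concatenate them into $1$-cycles of length $\Theta(n^2)$ whose pivot lies near the end of the bipartite block.

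Given such an edge order, the fill-up count is immediate: $\Theta(n^2)$ reduced columns of size $\Theta(n^2)$ give $\Theta(n^4)$ non-zero entries. For the cost, I would track the triangles added in the second block: a substantial fraction of them must be reduced against $\Theta(n^2)$ previously reduced dense columns of size $\Theta(n^2)$, which costs $\Theta(n^4)$ per triangle; with $\Theta(n^3)$ triangles in total this gives $\Theta(n^7)$. The main obstacle, and the reason Morozov's original construction~\cite{morozov2005persistence} cannot be reused directly, is the clique constraint: the triangle order is dictated by the edge order, so each new intra-part edge fires many triangles at once with identical pivots, and apparent pairs tend to absorb the new content. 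The delicate part is therefore the explicit choice of bipartite edge order at the beginning, which must induce, via the coarsely determined triangle order, $\Theta(n^2)$ distinct critical pivots whose reduced representatives grow by successive $4$-cycle concatenations into $\Theta(n^2)$-long cycles rather than collapsing to short ones.
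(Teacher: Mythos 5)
Your proposal correctly identifies the architecture the proof needs: a set of $\Theta(n^2)$ columns that become dense (size $\Theta(n^2)$) under reduction, yielding the $\Theta(n^4)$ fill-up, followed by $\Theta(n^3)$ later columns each of whose reduction must absorb $\Theta(n^2)$ of those dense columns, yielding the $\Theta(n^7)$ cost. This is also the structure of the paper's construction, where the dense columns are called \emph{fat} and the later ones \emph{costly}. The upper-bound argument in your first paragraph is also fine.

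However, the proposal has a genuine gap precisely at the point you label ``the delicate part.'' Everything in your sketch hinges on choosing a bipartite edge order under which (i) the reductions of the 4-cycle columns avoid cancellation so that they actually grow to $\Theta(n^2)$, and (ii) $\Theta(n^2)$ of them end up with pairwise-distinct critical pivots. You do not exhibit such an order, nor give any reason it exists, and this is where all the difficulty of the theorem lives. A naive order will not work: when two 4-cycles on bipartite edges with the same pivot are added, the result can shrink (they may share more than one edge), and there is no a priori reason the pivot sequence produced by the reduction keeps hitting fresh, large 4-cycles instead of collapsing. The paper resolves exactly this issue by introducing an explicit \emph{cascade}: a chain of $\Theta(n^2)$ step columns, built by connecting vertices along an Eulerian path of an auxiliary bipartite graph, so that consecutive cascade columns overlap in exactly one ``linking'' entry while their secondary entries lie in pairwise-distinct rows of a reserved row block. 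Passing a column through the cascade therefore strictly accumulates $\Theta(n^2)$ non-zero entries with no cancellation, and a second cascade plus a last group of $\Theta(n^3)$ columns with pivots landing in the first cascade's critical block produces the costly columns. Your proposal, as written, asserts the existence of the required edge order but supplies neither a construction nor a non-cancellation argument, so the theorem is not yet proved. A second, smaller issue: the phrase ``reduced against later-arriving triangles'' is backwards --- reduction only ever adds \emph{earlier} columns to a later column; it is the later (costly) columns that absorb the earlier fat ones, which is how the paper's cost argument is phrased.
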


This result complements Main Theorem~\ref{mainthm_er} because the clique filtration we construct is a possible instance of the ER filtration model, hence the expected fill-in and cost for this model are indeed smaller than the worst-case by a factor of roughly $n$.

\paragraph{Idea of the construction.} 
Recall that a clique filtration is not completely fixed by the order of edges: 
many triangles can be created by the insertion of an edge, forming columns in the boundary matrix with the same pivot, and the order of these columns influences the resulting matrix (even if their order was irrelevant for the expected bounds). Our construction for Theorem~\ref{thm:er_worst_case} carefully chooses an edge order and an order of the columns with the same pivot. The details are rather technical, so we start with a more high-level description of the major gadgets of our construction.

The main idea is to define two groups of $\Theta(n^2)$ edges, or equivalently, rows of the boundary matrix that we call group II rows and group III rows, with group II rows having smaller index than group III rows (the notation is chosen to fit the technical description that follows). We first make sure to produce $\Theta(n^2)$ columns in the reduced matrix such that each column has exactly one non-zero group III element that is its pivot, and $\Theta(n^2)$ non-zero group II elements. We call such columns \define{fat} for now. Achieving this already yields a fill-in of $\Theta(n^4)$. To get the cost bound, we make sure to produce $\Theta(n^3)$ further columns (i.e., on the right of the fat columns) which we call \define{costly} columns. They have the property that during the reduction, they reach an intermediate state where they have gathered $\Theta(n^2)$ non-zero elements in group III and their pivot is in this group as well. To complete the reduction, the algorithm is than required to add $\Theta(n^2)$ fat columns to the current column. That means that the cost of reducing a costly column is $\Theta(n^4)$, and since there are $\Theta(n^3)$ costly columns, the bound on the cost follows. 

\begin{figure}
    \centering
\scalebox{.8}{ 
$
\arraycolsep=.5pt\def\arraystretch{.4}
\arrayrulecolor{black!20}
\begin{array}{|ccccc|cccccccc|ccccc|cccccc|ccccc}
\multicolumn{5}{c}{\text{\footnotesize{Cascade 1}}} & & & & & & & & &\multicolumn{5}{c}{\text{\footnotesize{Cascade 2}}}\\
& & & & & & & & & & & & & & & & & & & & & & & \\
\cline{1-24} 
\x & & & & & & & & & & & & & & & & & & & & & & \x & &\\
& & & & \x & & & & & & & & & & & & & & & & \x & & &  \textcolor{white}{a} & \textcolor{white}{a} & \textcolor{white}{a} & &\multirow{5}{*}{\large{II}}\\
& \x & & & & & & & & & & & & & & & & & & & & & &  \\
& & & \x & & & & & & & & & & & & & & & & \x & & & &	\\
& & \x & & & & & & & & & & & & & & & & & & & & & \x	\\
\cline{1-24} 
& & & & & & & & & \x & & & & \x & & & & & & & & & & \\
& & & & & & & \x & & & & & & & & & \x & & & \x & & & &  \textcolor{white}{a} & \textcolor{white}{a} & \textcolor{white}{a} & &\multirow{5}{*}{\large{III}}\\
& & & & & & \x & & & & & & & & \x & & & & & & & & & \x \\
& & & & & & & & & & & \x & & & & & & \x & & & & & \x & \\
& & & & & & & & & & & & \x & & & \x & & & & & \x & & & \\
\cline{1-24} 
\x & \x & & & & & & & & & & & \x & & & & & & & & & & & \\
& \x & \x & & & & & & & & & \x & & & & & & & & & & & &  \textcolor{white}{a} & \textcolor{white}{a} & \textcolor{white}{a} & &\multirow{5}{*}{\large{IV}}\\
& & \x & \x & & & & & & \x & & & & & & & & & & & & & & \\
& & & \x & \x & & & \x & & & & & & & & & & & & & & & & \\
& & & & \x & & \x & & & & & & & \x & & & & & & & & & & \\
\cline{1-24} 
& & & & & \x & \x & \x & & & & & & & & & & & & & & & &  \textcolor{white}{a} & \textcolor{white}{a} & \textcolor{white}{a} & &\multirow{3}{*}{\large{V}}\\
& & & & & & & & \x & \x & & & & & & & & & & & & & & \\
& & & & & & & & & & \x & \x & \x & & & & & & & & & & & \\
\cline{1-24} 
& & & & & & & & & & & & & \x & \x & & & & & & & & & \\
& & & & & & & & & & & & & & \x & \x & & & & & & & &  \textcolor{white}{a} & \textcolor{white}{a} & \textcolor{white}{a} & &\multirow{5}{*}{\large{VI}}\\
& & & & & & & & & & & & & & & \x & \x & & & & & & & \\
& & & & & & & & & & & & & & & & \x & \x & & & & \x & & \\
& & & & & & & & & & & & & & & & & \x & \x & & & & & \\
\cline{1-24} 
& & & & & & & & & & & & & & & & & & \x & \x & \x & & &  \textcolor{white}{a} & \textcolor{white}{a} & \textcolor{white}{a} & &\multirow{2}{*}{\large{VII}}\\
& & & & & & & & & & & & & & & & & & & & & \x & \x & \x \\
\cline{1-24}
\end{array}
$
}
\scalebox{.8}{ 
$
\arrayrulecolor{black!20}
\arraycolsep=.5pt\def\arraystretch{.4}
\begin{array}{|ccccc|cccccccc|ccccc|cccccc|}
 & & & & & \multicolumn{8}{c}{\text{\footnotesize{Fat col.}}} & & & & & & \multicolumn{6}{c}{\text{\footnotesize{Costly col.}}}\\
& & & & & & & & & & & & & & & & & & & & & & & \\
\hline 
\x & & & & & & \x & \x & & \x & & \x & \x & & & & & & & & & & \x & \\
& & & & \x & & \x & \x & & \x & & \x & \x & & & & & & & & & & &	\\
& \x & & & & & \x & \x & & \x & & \x & \x & & & & & & & & & & &	\\
& & & \x & & & \x & \x & & \x & & \x & \x & & & & & & & \x & & & & \\
& & \x & & & & \x & \x & & \x & & \x & \x & & & & & & & & & & & \x \\
\hline 
& & & & & & & & & \x & & & & \x & & & & & & \x & \x & & \x & \x \\
& & & & & & & \x & & & & & & & & & \x & & & \x & \x & & \x & \x \\
& & & & & & \x & & & & & & & & \x & & & & & \x & \x & & \x & \x \\
& & & & & & & & & & & \x & & & & & & \x & & \x & \x & & \x & \x \\
& & & & & & & & & & & & \x & & & \x & & & & \x & \x & & \x & \x \\
\hline 
\x & \x & & & & & & & & & & & \x & & & & & & & & & & & \\
& \x & \x & & & & & & & & & \x & & & & & & & & & & & & \\
& & \x & \x & & & & & & \x & & & & & & & & & & & & & & \\
& & & \x & \x & & & \x & & & & & & & & & & & & & & & & \\
& & & & \x & & \x & & & & & & & \x & & & & & & & & & & \\
\hline 
& & & & & \x & \x & \x & & & & & & & & & & & & & & & & \\
& & & & & & & & \x & \x & & & & & & & & & & & & & & \\
& & & & & & & & & & \x & \x & \x & & & & & & & & & & & \\
\hline 
& & & & & & & & & & & & & \x & \x & & & & & & & & & \\
& & & & & & & & & & & & & & \x & \x & & & & & & & & \\
& & & & & & & & & & & & & & & \x & \x & & & & & & & \\
& & & & & & & & & & & & & & & & \x & \x & & & & \x & & \\
& & & & & & & & & & & & & & & & & \x & \x & & & & & \\
\hline 
& & & & & & & & & & & & & & & & & & \x & \x & \x & & & \\
& & & & & & & & & & & & & & & & & & & & & \x & \x & \x \\
\hline
\end{array}
$
}
\caption{Example of a boundary matrix realising the worst-cases, unreduced (left) and halfway in the reduction (right), i.e. after running through both cascades.}
    \label{fig_highlight}
\end{figure}

The main question is: how do we produce fat and costly columns?
Let us start with fat columns.
The key notion is the one of the \define{cascade}; we refer to Figure~\ref{fig_highlight}
for an illustration of the following description.
We introduce another set of $\Theta(n^2)$ edges that define group IV (which come after group III in the edge order).
Let $i$ denote the row index of some group IV row.
Our construction ensures that there is a column with pivot $i$ that has as further entries $i-1$ and some entry in group II. We select this column as step column for $i$, so it does not change in the reduction. The set of these step columns forms the cascade.
Moreover, we ensure that all entries in group II over all cascade columns are at pairwise distinct indices to avoid unwanted cancellation in later steps. 

After construction the cascade, we include $\Theta(n)$ edges of group V. This generates $\Theta(n^2)$ columns that acquire a group IV pivot during the reduction. 
Moreover, we ensure that the (partially reduced) column has exactly one non-zero element of group III, and that all these group III indices are pairwise distinct for all columns
in this group.
To reduce this column further, we have to add the cascade columns, until the non-zero group III element becomes the pivot. While iterating through the cascade, the reduced column accumulates more and more non-zero elements in group II, resulting in a size of $\Theta(n^2)$. This creates the fat columns. Note that no two fat columns are added to each other because we ensure
that they have pairwise distinct pivots from group III~-- again, this avoids unwanted cancellation.

For generating costly columns, the idea is the same: we construct another cascade (using rows of group VI) and then a group VII to ensure that the cascade will fill up columns in the row indices of group III. Afterwards,
the reduction has to continue and adds columns with pivots at group III, which are precisely
the fat columns from the previous step.

\begin{figure}[H]
    \centering
\begin{tikzpicture}
\node (GI)[label={\textcolor{Violet}{{\bf I}}}] at (4,7.5) {};
\node (GII)[label={\textcolor{CornflowerBlue}{{\bf II}}}] at (6.7,3.85) {};
\node (GIII)[label={\textcolor{orange}{III}}] at (7.5,5.35) {};
\node (GIV)[label={\textcolor{OliveGreen}{IV}}] at (.5,3.1) {};
\node (GV)[label={\textcolor{red}{V}}] at (3.5,5.5) {};
    \tikzstyle{point}=[circle,draw=black,fill=black,inner sep=0pt,minimum width=4pt,minimum height=4pt]
    \node (roof)[point, label=above:{roof}] at (6,8.5) {};
\begin{scope}[xshift=50pt]
    \tikzstyle{point}=[circle,draw=black,fill=black,inner sep=0pt,minimum width=4pt,minimum height=4pt]
    \node (a0)[point, label=left:{$a_0$}] at (-2.1,5) {};
    \node (a1)[point, label=right:{$a_1$}] at  (-1.5,5) {};
    \node (adots)[label={$\cdots$}] at (-0.5,4.6) {};
    \node (ap)[point, label=right:{$a_{p-1}$}] at (0.1,5) {};
\end{scope}
\begin{scope}[xshift=50pt]
    \tikzstyle{point}=[circle,draw=black,fill=black,inner sep=0pt,minimum width=4pt,minimum height=4pt]
    \node (b0)[point, label=left:{$b_0$}] at (3,6.5) {};
    \node (b1)[point, label=right:{$b_1$}] at  (3.7,6.5) {};
    \node (bdots)[label={$\cdots$}] at (4.6,6.1) {};
    \node (bp)[point, label=right:{$b_{p-1}$}] at (5.3,6.5) {};
\end{scope}
\begin{scope}[xshift=50pt]
    \tikzstyle{point}=[circle,draw=black,fill=black,inner sep=0pt,minimum width=4pt,minimum height=4pt]
    \node (c0)[point, label=left:{$c_0$}] at (3,5) {};
    \node (c1)[point, label=right:{$c_1$}] at  (3.7,5) {};
    \node (cdots1)[label={$\cdots$}] at (4.6,4.6) {};
    \node (cp)[point, label=right:{$c_{p-1}$}] at (5.3,5) {};
\end{scope}
\begin{scope}[xshift=50pt]
    \tikzstyle{point}=[circle,draw=black,fill=black,inner sep=0pt,minimum width=4pt,minimum height=4pt]
    \node (d0)[point, label=below:{$d_0$}] at (3,3.5) {};
    \node (d1)[point, label=below:{$d_1$}] at  (3.7,3.5) {};
    \node (cdots2)[label={$\cdots$}] at (4.6,3.1) {};
    \node (dp)[point, label=below:{$d_{p-1}$}] at (5.3,3.5) {};
\end{scope}
\begin{scope}[xshift=50pt]
    \tikzstyle{point}=[circle,draw=black,fill=black,inner sep=0pt,minimum width=4pt,minimum height=4pt]
    \node (e0)[point, label=left:{$e_0$}] at (8,5) {};
    \node (e1)[point, label=right:{$e_1$}] at  (8.7,5) {};
    \node (edots)[label={$\cdots$}] at (9.6,4.6) {};
    \node (ep)[point, label=right:{$e_{p-1}$}] at (10.3,5) {};
\end{scope}
\draw[Violet, line width=0.05cm]  (roof) -- (a0);
\draw[Violet, line width=0.05cm]  (roof) -- (a1);
\draw[Violet, line width=0.05cm]  (roof) -- (ap);
\draw[Violet, line width=0.05cm]  (roof) -- (b0);
\draw[Violet, line width=0.05cm]  (roof) -- (b1);
\draw[Violet, line width=0.05cm]  (roof) -- (bp);
\draw[Violet, line width=0.05cm]  (a0) -- (a1);
\draw[Violet, line width=0.05cm]  (a0) to [out=-20,in=225] (c0);
\draw[Violet, line width=0.05cm]  (roof) -- (e0);
\draw[Violet, line width=0.05cm]  (roof) -- (e1);
\draw[Violet, line width=0.05cm]  (roof) -- (ep);
\draw[Violet, line width=0.05cm]  (e0) -- (e1);
\draw[Violet, line width=0.05cm]  (e0) to (bp);
\draw[Violet, line width=0.05cm]  (ep) -- (d0);
\draw[Violet, line width=0.05cm]  (ep) -- (d1);
\draw[Violet, line width=0.05cm]  (ep) -- (dp);
\draw[CornflowerBlue, line width=0.05cm]  (c0) -- (d1);
\draw[CornflowerBlue, line width=0.05cm]  (c0) -- (dp);
\draw[CornflowerBlue, line width=0.05cm]  (c1) -- (d0);
\draw[CornflowerBlue, line width=0.05cm]  (c1) -- (dp);
\draw[CornflowerBlue, line width=0.05cm]  (cp) -- (d0);
\draw[CornflowerBlue, line width=0.05cm]  (cp) -- (d1);
\draw[orange, thick=0.25]  (b0) -- (c0);
\draw[orange, thick=0.25]  (b0) -- (c1);
\draw[orange, thick=0.25]  (b0) -- (cp);
\draw[orange, thick=0.25]  (b1) -- (c0);
\draw[orange, thick=0.25]  (b1) -- (c1);
\draw[orange, thick=0.25]  (b1) -- (cp);
\draw[orange, thick=0.25]  (bp) -- (c0);
\draw[orange, thick=0.25]  (bp) -- (c1);
\draw[orange, thick=0.25]  (bp) -- (cp);
\draw[OliveGreen, thick=0.25]  (a0) to [out=-52,in=190] (d1);
\draw[OliveGreen, thick=0.25]  (a0) to [out=-48,in=210] (cp);
\draw[OliveGreen, thick=0.25]  (a1) to [out=-46,in=205] (cp);
\draw[OliveGreen, thick=0.25]  (a1) to [out=-50,in=190] (d0);
\draw[OliveGreen, thick=0.25]  (ap) to [out=-50,in=210] (c0);
\draw[red, densely dashdotted, thick=0.25]  (ap) to [out=57,in=140] (b0);
\draw[red, densely dashdotted, thick=0.25]  (ap) to [out=66,in=130] (b1);
\draw[red, densely dashdotted, thick=0.25]  (ap) to [out=70,in=130] (bp);
\end{tikzpicture}
\caption{Points of the five groups, where the point $x_i$ belong to the group $X$, and edges of the first five groups. The edges of group I and II are thicker, the ones of group V are dashed.}
\label{F:edge_construction}
\end{figure}

\begin{point}\label{P:groups_edges}
\emph{Groups of edges.}
Group I is given by the $3p$ edges between $roof$ and $A$, $roof$ and $B$, and $roof$ and $E$, by the $2(p-1)$ edges that form the path $(a_0,\dots,a_{p-1})$ in $A$ and the path $(e_0,\dots,e_{p-1})$ in $E$, by the edges $a_0c_0$ and $b_{p-1}e_0$, and finally by the $\frac{1}{2}(p^2+p)$ edges between the last $\frac{p+1}{2}$ vertices in $E$ and all the vertices in $D$.
Group II has $p(p-1)$ edges, given by all the edges of the complete bipartite graph between $C$ and $D$ but for the edges $c_id_{i}$, for $i=0,\dots,p-1$. 
Group III is given by all the $p^2$ edges that form a complete bipartite graph between $B$ and $C$. 
The order of the edges inside each of these groups is irrelevant and chosen randomly.
The groups IV and VI are given by a subset of cardinality $p^2-1$ of the $2p^2$ edges between $A$ and $C\cup D$, and between $E$ and $B\cup C$, respectively. 
These edges and their order have to be chosen carefully and we describe them in Paragraphs~\ref{P:order_edges_IV} and \ref{P:order_edges_VI}, respectively. 
Group V is given by all the $p$ edges from $a_{p-1}$ to $B$. 
Group VII is made of the $\frac{(p+1)p}{2}$ edges between $\{b_{\frac{p-1}{2}},\dots,b_{p-1}\}$ and $D$, and are ordered firstly in decreasing order on the indices in $B$ and then in decreasing order on the indices in $D$. 
Finally, the eighth group is given by all the remaining edges, whose order is irrelevant as long as they enter in the filtration after all the previous groups. 
We do not consider them further.
The first five groups are depicted in Figure~\ref{F:edge_construction} and the sixth and seventh groups in the zoom-in of Figure~\ref{F:zoom-in_edge_construction}.
\end{point}

\vspace{-1cm}
\begin{figure}[H]
    \centering
\scalebox{0.8}{
\begin{tikzpicture}
\node (GVI)[label={\textcolor{OliveGreen}{VI}}] at (4.3,2.4) {};
\node (GVII)[label={\textcolor{red}{\textbf{VII}}}] at (5.6,-1) {};
\begin{scope}[xshift=50pt]
    \tikzstyle{point}=[circle,draw=black,fill=black,inner sep=0pt,minimum width=4pt,minimum height=4pt]
    \node (b0)[point, label={$b_0$}] at (0,1.5) {};
    \node (bdots1)[label={$\cdots$}] at (0.8,1.1) {};
    \node (b1)[point, label={$b_{j}$}] at (1.6,1.5) {};
    \node (bdots2)[label={$\cdots$}] at (2.4,1.1) {};
    \node (bp)[point, label={$b_{p-1}$}] at (3.2,1.5) {};
\end{scope}
\begin{scope}[xshift=50pt]
    \tikzstyle{point}=[circle,draw=black,fill=black,inner sep=0pt,minimum width=4pt,minimum height=4pt]
    \node (c0)[point, label=left:{$c_0$}] at (0,0) {};
    \node (c1)[point, label=right:{$c_j$}] at  (1.6,0) {};
    \node (cp)[point, label=right:{$c_{p-1}$}] at (3.2,0) {};
\end{scope}
\begin{scope}[xshift=50pt]
    \tikzstyle{point}=[circle,draw=black,fill=black,inner sep=0pt,minimum width=4pt,minimum height=4pt]
    \node (d0)[point, label=below:{$d_0$}] at (0,-1.5) {};
    \node (ddots1)[label={$\cdots$}] at (0.8,-1.9) {};
    \node (d1)[point, label=below:{$d_j$}] at  (1.6,-1.5) {};
    \node (ddots2)[label={$\cdots$}] at (2.4,-1.9) {};
    \node (dp)[point, label=below:{$d_{p-1}$}] at (3.2,-1.5) {};
\end{scope}
\begin{scope}[xshift=50pt]
    \tikzstyle{point}=[circle,draw=black,fill=black,inner sep=0pt,minimum width=4pt,minimum height=4pt]
    \node (e0)[point, label=right:{$e_0$}] at (5,1) {};
    \node (edots1)[label={$\cdots$}] at (5.9,.6) {};
    \node (e1)[point, label=right:{$e_j$}] at (6.6,1) {};
    \node (edots2)[label={$\cdots$}] at (7.5,.6) {};
    \node (ep)[point, label=right:{$e_{p-1}$}] at (8.2,1) {};
\end{scope}
\draw[OliveGreen, thick=0.25]  (c1) to [out=40,in=170] (e0);
\draw[OliveGreen, thick=0.25]  (b0) to [out=40,in=135] (e0);
\draw[OliveGreen, thick=0.25]  (ep) to [out=145,in=32] ++ (-8.1,2.3) to [out=215,in=145] (c0);
\draw[OliveGreen, thick=0.25]  (b1) to [out=40,in=145] (ep);
\draw[OliveGreen, thick=0.25]  (bp) to [out=20,in=150] (ep);
\draw[red, line width=0.05cm]  (bp) to [out=-80,in=80] (dp);
\draw[red, line width=0.05cm]  (bp) to [out=265,in=30] (d1);
\draw[red, line width=0.05cm]  (bp) to [out=260,in=20] (d0);
\draw[red, line width=0.05cm]  (b1) to [out=-30,in=105] (dp);
\draw[red, line width=0.05cm]  (b1) to [out=220,in=80] (d0);
\draw[red, line width=0.05cm]  (b1) to [out=250,in=110] (d1);
\end{tikzpicture}
}
\caption{Zoom-in to the edges of groups VI and VII. For readability, $j= \frac{p-1}{2}$. The edges of group VII are thicker.}
\label{F:zoom-in_edge_construction}
\end{figure}
\begin{point}\label{P:order_edges_IV}
\emph{Group IV.}
Since each vertex in the graph $G=(C\cup D, \text{group II})$ has even degree, there exists an Eulerian path on $G$, starting in $c_0$. 
The edges in group IV are given as follows: for $j=1,\dots, p-1$, starting from the $(jp-j)$-th vertex in the path, we connect $p-1$ consecutive vertices of the Eulerian path to the vertex $a_{j-1}$. 
The edges are ordered first by increasing $j$ and then by the order of the Eulerian path.
Note that, by construction, none of the columns with pivots in row group IV have a non-zero element in row group III.

We choose as step columns the triangles with two edges in group IV, i.e., the elements in the cascade.
Of these triangles, all but $p-1$, i.e., the triangles constructed using the edges between elements in $A$, have as third edge an element of the Eulerian path. 
In particular, we have $(p-1)^2$ step columns that have all different elements in the row group II. 
These columns form the first cascade.
The order of the non-step triangles is irrelevant.
\end{point}

\begin{figure}[H]
    \centering
\begin{subfigure}{.45\textwidth}
\centering
\begin{tikzpicture}
\begin{scope}[xshift=50pt]
    \tikzstyle{point}=[circle,draw=black,fill=black,inner sep=0pt,minimum width=4pt,minimum height=4pt]
    \node (a0)[point, label=above:{$a_0$}] at (-2,5) {};
    \node (a1)[point, label=above:{$a_1$}] at  (-1,5) {};
    \node (ap)[point, label=above:{$a_{2}$}] at (0,5) {};
\end{scope}
\begin{scope}[xshift=50pt]
    \tikzstyle{point}=[circle,draw=black,fill=black,inner sep=0pt,minimum width=4pt,minimum height=4pt]
    \node (c0)[point, label=above:{$c_0$}] at (3,5) {};
    \node (c1)[point, label=above:{$c_1$}] at  (4,5) {};
    \node (cp)[point, label=above:{$c_{2}$}] at (5,5) {};
\end{scope}
\begin{scope}[xshift=50pt]
    \tikzstyle{point}=[circle,draw=black,fill=black,inner sep=0pt,minimum width=4pt,minimum height=4pt]
    \node (d0)[point, label=below:{$d_0$}] at (3,3.5) {};
    \node (d1)[point, label=below:{$d_1$}] at  (4,3.5) {};
    \node (dp)[point, label=below:{$d_{2}$}] at (5,3.5) {};
\end{scope}
\draw[Violet, line width=0.05cm]  (a0) -- (a1);
\draw[Violet, line width=0.05cm]  (a1) -- (ap);
\draw[Violet, line width=0.05cm]  (a0) to [out=-20,in=210] (c0);
\draw[CornflowerBlue, line width=0.05cm]  (c0) -- (d1);
\draw[CornflowerBlue, line width=0.05cm]  (c0) -- (dp);
\draw[CornflowerBlue, line width=0.05cm]  (c1) -- (d0);
\draw[CornflowerBlue, line width=0.05cm]  (c1) -- (dp);
\draw[CornflowerBlue, line width=0.05cm]  (cp) -- (d0);
\draw[CornflowerBlue, line width=0.05cm]  (cp) -- (d1);
\draw[OliveGreen, thick=0.25]  (a0) to [out=-50,in=190] (d1);
\draw[OliveGreen, thick=0.25]  (a0) to [out=50,in=140] (cp);
\draw[OliveGreen, thick=0.25]  (a1) to [out=52,in=150] (cp);
\draw[OliveGreen, thick=0.25]  (a1) to [out=48,in=150] (c1);
\draw[OliveGreen, thick=0.25]  (a1) to [out=-50,in=190] (d0);
\draw[OliveGreen, thick=0.25]  (ap) to (c0);
\draw[OliveGreen, thick=0.25]  (ap) to [out=-30,in=210] (c1);
\draw[OliveGreen, thick=0.25]  (ap) to [out=-40,in=170] (dp);
\end{tikzpicture}

\end{subfigure}
$ \qquad $
\begin{subfigure}{.4\textwidth}
\centering
\adjustbox{scale=0.8}{
\begin{blockarray}{ccccccccccc}
    \begin{block}{c[cccccccccc]}
    $a_0a_1$ &   &   & 1 &   &   &   &   &   &   \\
    $a_1a_2$ &   &   &   &   &   &   & 1 &   &   \\
    $a_0c_0$ & 1 &   &   &   &   &   &   &   &   \\
    \cline{1-11} 
    $c_0d_1$ & 1 &   &   &   &   &   &   &   &   \\
    $c_0d_2$ &   &   &   &   &   &   &   &   & 1 \\
    $c_1d_0$ &   &   &   &   &   & 1 &   &   &   \\
    $c_1d_2$ &   &   &   &   &   &   &   & 1 &   \\
    $c_2d_0$ &   &   &   & 1 & 1 &   &   &   &   \\
    $c_2d_1$ &   & 1 &   &   &   &   &   &   &   \\
    \cline{1-11} 
    $a_0d_1$ & 1 & 1 &   &   &   &   &   &   &   \\
    $a_0c_2$ &   & 1 & 1 &   & 1 &   &   &   &   \\
    $a_1c_2$ &   &   & 1 & 1 &   &   &   &   &   \\
    $a_1d_0$ &   &   &   & 1 & 1 & 1 &   &   &   \\
    $a_1c_1$ &   &   &   &   &   & 1 & 1 &   &   \\
    $a_2c_1$ &   &   &   &   &   &   & 1 & 1 &   \\
    $a_2d_2$ &   &   &   &   &   &   &   & 1 & 1 \\
    $a_2c_0$ &   &   &   &   &   &   &   &   & 1 \\
    \end{block}
    \end{blockarray}
    }
    \hspace{-.35cm}
    \adjustbox{scale=0.7}{
    \begin{turn}{270}
    \hspace{-5cm} \textcolor{Violet}{{\bf Group I}} \hspace{.5cm} \textcolor{CornflowerBlue}{{\bf Group II}} \hspace{1.5cm} \textcolor{OliveGreen}{Group IV} \hspace{1cm}
    \end{turn}
    }
\end{subfigure}
\caption{Example of the cascade construction for $p=3$ given by the edges (left) and relative (sub)matrix (right). For clarity's sake, we depicted only a subset of the edges, namely group II, IV and some of group I. The edges of group I and II are thicker.}
\label{F:example_groupIV}
\end{figure}

\begin{point}\label{P:order_triangle_V}
\emph{Step columns from group V.}
We fix as step columns all the triangles that have an edge in group V and two edges in group I. 
The order of all the other $p-1$ triangles created by that edge in V is irrelevant, and we add them as required by the filtration.
Note that none of the step columns has a non-zero element in row group III, but each of the non-step columns has one.
\end{point}

\begin{point}\label{P:order_edges_VI}
\emph{Edges of group VI and triangles from group VI and VII.}
Consider the subset $S$ of edges in the bipartite graph $B\cup C$ given by all the edges but $b_ic_i$, for $i=0,\dots,p-1$. 
Now every vertex in $B\cup C$ has even degree in $S$, and thus there exists an Eulerian path on $S$, starting in $b_{p-1}$.
We give the ordered edges of group VI analogously to how we gave the edges of group IV: for $j=1,\dots, p-1$, starting from the $(jp-j)$-th vertex in the path, we connect $p-1$ consecutive vertices of the Eulerian path to the vertex $e_{j-1}$. 
The edges are ordered first by increasing $j$ and then by the order of the Eulerian path.

For the order of the triangles closed by an edge in group VI, we choose as step columns the ones given by triangles with two edges in group VI and one in group III, but for the first step column which is given by the points $b_{p-1}c_{p-2}e_0$. 
Therefore the step columns form a cascade with all different elements in group III.

The step columns with pivots in group VII are given by the triangle with a vertex in $B$, one in $D$ and one in $E$. 
The order of the remaining triangles of both groups is irrelevant and chosen randomly.
\end{point}

\begin{proof}[Proof of Theorem~\ref{thm:er_worst_case}]
We begin by proving the fill-in, and then we use it to prove the complexity.
The edges of group I form some triangles whose reduction is not relevant to the worst-cases. 
The edges of group II and group III do not form any triangles. 
The edges of group IV close many triangles; the ones corresponding to the step columns form the cascade and are not reduced. 
The columns of the other triangles need to be reduced. 
At the end of their reduction, they will have a pivot in row group II which does not influence the rest of the construction. 
We now consider group V. 
By construction, in the reduction of a column $t$ with pivot in row group V, we add the previous step columns with pivot in row group V, moving the pivot of $t$ somewhere in the last $p-1$ rows of group IV. 
This triggers the cascade reduction, and we add all the $\Theta(p^2)$ columns of the cascade. 
All the cascade columns have different non-zero elements in row group II and none of them has a non-zero element in row group III.
Thus, $t$ accumulates $\Theta(p^2)$ non-zero entries in row group II before exiting the reduction with a pivot in row group III. 
This procedure has to be repeated for all the $\Theta(p^2)$ non-step columns that the edges of group V form, resulting in $\Theta(p^2)$ columns with $\Theta(p^2)$ elements, for a total fill-in of $\Theta(n^4)$.

We now discuss the complexity.  
We first note that, by construction, none of the columns with pivots in row group VI or VII has a non-zero element in group IV or V. 
Moreover, there are $\Theta(p^3)$ non-step columns with pivot in row group VII, given by one of the $\frac{p(p+1)}{2}$ edges in group VII and the $p-1$ points in $C$. 
Since the step columns with pivots in row group VII have each a non-zero element in the last half of row group VI, the reduction of those non-step columns pass through at least half of the cascade of group VI, thus accumulating $\Theta(p^2)$ elements in row group III. 
Now, from the fill-in discussion, the rows of group III are already pivots, specifically of the $\Theta(p^2)$-dense columns discussed above. 
Therefore, we have $\Theta(p^3)$ columns that accumulates $\Theta(p^2)$ elements, requiring thus an equal amount of operations, each of which flip $\Theta(p^2)$ elements, for a total of $\Theta(n^7)$ complexity.
\end{proof}
\section{Conclusions and future work}
\label{section_conclusion}
We established upper bounds for fill-in and cost of matrix reduction for three filtration types commonly studied in topological data analysis. 
In the \v{C}ech and Vietoris--Rips case, we managed to do this for arbitrary homological degree. 
Moreover, we showed that the fill-in bounds are tight for \v{C}ech and Vietoris--Rips filtrations in degree $k>1$.
In the Erd\H{o}s--R\'enyi case, the major obstacle to tackle dimensions $>1$ is the generalization of \cref{lemma_er_beta}, for which we are not aware of a proof (such a generalization is proved for homology
with rational coefficients~\cite{kahle-sharp}).

Complete filtrations were assumed in this work. 
It is common in practice to consider truncated filtrations. 
For instance, for Vietoris--Rips filtrations, one often removes all simplices with a diameter greater than a given threshold. In this case, when the number of columns is equal to $\column_0\leq\column$, the cost bound of Lemma~\ref{lem:cost_bound} reduces to $\column_0\fillin\bdmat'$. 

Our bounds on average fill-in and cost for matrix reduction are better than the currently best known worst-case bounds. 
We showed that these worst-cases can be realised in the Erd\H{o}s-R\'enyi model, but it is unclear if they can be realised by a \v{C}ech or Vietoris--Rips filtration.

In this work, points sampled uniformly i.i.d. in the unit cube were considered for the geometric filtrations. A natural direction is to investigate the case of points sampled close to a manifold embedded in Euclidean space. However, the gluing between the locally Euclidean patches may lead to the question of distributed computation of persistence. 

\paragraph{Declaration.} On behalf of all authors, the corresponding author states that there is no conflict of interest.

\paragraph*{Acknowledgments.} 
We thank Uli Bauer, Micka\"el Buchet, Matthew Kahle, and Jeff Kahn for useful discussions. We also thank the anonymous reviewer of a previous version of this article
whose insightful question led to an improved Main Theorem 1.
BG and MK were supported by the Austrian Science Fund (FWF) grant number P 29984-N35 and P 33765-N. MS was supported by the Austrian Science Fund (FWF) grant number W1230. 
GH was partially supported by {\'E}cole Polytechnique and thanks TU Graz for the hospitality during his Master Internship.

\printbibliography

\appendix

\section{Probabilistic part of Section~\ref{section_cech}}
\label{A_lemma_C}

\paragraph{Proof of Lemma \ref{lem:CentralLemmaB_CechCase}.} We repeat the proof of \cite{kahle2011random} Theorem $6.1$ for the convenience of the reader. Recall our adaptation of the result:  
\begin{lemma-non} 
Given positive integers $\ell$ and $k$, there exists a constant $c>0$ such that for  $\alpha \geq c {\left(\frac{\log n}{n}\right)}^{1/d}$:
\begin{equation*}
\mathbb{P} \left( \beta_k(\check{C}(\mathcal{X}_n,\alpha)) \neq 0 \right) \leq \frac{1}{n^\ell}
\end{equation*}
for sufficiently large $n$. 
\end{lemma-non}

\begin{proof}

The proof relies on the following Nerve Theorem (Theorem 10.7 in \cite{bjoerner}).
\begin{theorem}
If $X$ is a triangulable topological space and if $\mathcal{A}=(A_i)_{i\in I}$ is a finite cover of $X$ by closed sets such that every nonempty intersection of sets in $\mathcal{A}$ is contractible, then $X$ and the nerve $\mathcal{N}(\mathcal{A})$ are homotopy equivalent. 
\end{theorem}

Thus, whenever $\alpha$ is large enough such that the balls $\{B_{\alpha}(X_i) \}_{X_i\in \mathcal{X}_n}$ cover the unit cube $[-\frac{1}{2},\frac{1}{2}]^d$, the \v{C}ech complex $\cechmod$ is contractible and therefore the $k$-th Betti number is zero. Assume that there exists a radius $R\geq c (\log n/n)^{1/d}$ such that $\beta_k(\check{C}(\mathcal{X}_n,R)) \neq 0$. 
By contraposition, the balls $\{B_R(X_i)\}_{x_i\in \mathcal{X}_n}$ cannot cover the unit cube. 
We can see $R$ as a function depending on $n$. 
As such, $R$ is sufficiently small, i.e. $R\rightarrow 0$ as $n\rightarrow \infty$, and the probability of the balls not covering the cube is now bounded.  \\
Let $\lambda \mathbb{Z}^d$ denote the $d$-dimensional cubical lattice linearly scaled in every direction by the factor $\lambda=R/(2\sqrt{d})$. The Lebesgue measure is denoted by $\mu$. There are $N=\mu([-\frac{1}{2},\frac{1}{2}]^d)/\lambda^{d}+\mathcal{O}(1/\lambda^{d-1})$ boxes in $\lambda \mathbb{Z}^d$ intersecting $[-\frac{1}{2},\frac{1}{2}]^d$ and we write $S$ for the set of boxes completely contained in ${[-\frac{1}{2},\frac{1}{2}]}^d$. Notice that each box intersecting the boundary of ${[-\frac{1}{2},\frac{1}{2}]}^d$ is adjacent to at least one box in $S$. 
If every box in $S$ contains at least one point in $\mathcal{X}_n$, since each box has a diameter of $\lambda \sqrt{d} = R/2$, the unit cube is completely covered by balls of radius $R$ centered in points of $\mathcal{X}_n$.  \\
By the two paragraphs above, the considered event implies that there has to be at least one box in $S$ which contains no points of $\mathcal{X}_n$. 
With $p_0\coloneqq\mathbb{P}(B\cap \mathcal{X}_n=\emptyset)$ being the probability that box $B\in S$ is empty, we can write

\begin{equation}
\label{eq:CechProof_Estimate1}
\mathbb{P} \left( \exists \alpha \geq c {\left(\frac{\log n}{n}\right)}^{1/d} \ \mathrm{such} \ \mathrm{that} \ \beta_k(\check{C}(\mathcal{X}_n,r)) \neq 0 \right) \leq \mathbb{P}(\exists B \in S\colon B\cap \mathcal{X}_n= \emptyset) \leq N p_0 \, .
\end{equation}
For a box $B\in S$ the independence and identical uniform distribution of the points in $\mathcal{X}_n$ yields
\begin{equation}
\label{eq:CechProof_Estimate2}
p_0 = \left(1- \frac{\lambda^d}{\mu({[-\frac{1}{2},\frac{1}{2}]}^d)} \right)^n \leq \exp ( -\lambda^d n) = \exp(-CR^dn)
\end{equation}
where $C = \frac{1}{2^d d^{d/2}}$. With inequality $R\geq c {(\log n/n)}^{1/d}$ we obtain
\begin{equation*}
p_0 \leq \exp(- C c^d \log(n)) = n^{-C c^d}. 
\end{equation*}
Further, $N=1/\lambda^d + \mathcal{O}(1/\lambda^{d-1})=(1+o(1))/(CR^d)$ and thus with setting $c = {((1+\ell)/C )}^{1/d}$, 
\begin{equation*}
N p_0 \leq \frac{1 + o(1)}{C c^d \log n} n^{1-C c^d}\leq \frac{1}{n^\ell}
\end{equation*}
for sufficiently large $n$ which gives the desired result.
\end{proof}

\paragraph{Proof of Lemma \ref{lem:SecondLemmaB_CechCase}.} The result to be proven is again recalled: 

\begin{lemma-non}
Let $\mathcal{X}_n\coloneqq\left\{ p_1,\dots,p_n \right\}$ be a set of i.i.d. uniformly distributed random variables in ${[-\frac{1}{2},\frac{1}{2}]}^d$ and define $\alpha^*\coloneqq c{\left(\frac{\log n}{n}\right)}^{1/d}$. Let $N_k=N_k(\mathcal{X}_n)$ be the number of $k$-simplices in $\check{C}(\mathcal{X}_n,\alpha^*)$. 
Then for any $\ell\in\mathbb{N}$ there exists a constant $\lambda >0$ such that we have
\begin{equation}
\label{eq:conc_ineq}
\mathbb{P}\left(N_k\geq \lambda n \log^k n\right) \leq \frac{1}{n^\ell},
\end{equation}
for sufficiently large $n$. 
\end{lemma-non}

\begin{proof}
	We prove the claim by induction on $k$. 
	For $k=0$, since $N_0=n$ always, we can	set $\lambda\coloneqq 2$ to satisfy the inequality.
	For arbitrary $k\geq 1$, by induction, we can choose $\lambda_{k-1}$ such that
	\[\mathbb{P}(N_{k-1}\geq\lambda_{k-1} n \log^{k-1} n)\leq \frac{1}{n^{\ell+1}}.\]

        Informally, this means that having many $(k-1)$-simplices
        is an unlikely event. Now, to have
        many $k$-simplices, we either need many $(k-1)$-simplices already,
        or we have to generate many $k$-simplices out of not-so-many
        $(k-1)$-simplices. Formally:
	
	\begin{equation}\label{eq:lemma4_4eq_1}
		\begin{aligned}
			\mathbb{P}\left(N_k\geq \lambda_k n\log^k n\right) 
			&= \mathbb{P}\left(N_k\geq \lambda_k n \log^k n \wedge N_{k-1} \geq \lambda_{k-1} n \log^{k-1}n\right)  \\
			&\quad + \mathbb{P}\left(N_k\geq \lambda_k n \log(n)^k \wedge N_ {k-1} < \lambda_{k-1} n \log^{k-1}n\right) \\
			&\leq \mathbb{P}\left(N_{k-1} \geq \lambda_{k-1} n \log^{k-1}n\right) \\
			&\quad + \mathbb{P}\left(N_k\geq \lambda_k n \log^kn \wedge N_{k-1} < \lambda_{k-1} n \log^{k-1}n\right) \, .
		\end{aligned}
	\end{equation}
	
	The first summand in the last inequality of (\ref{eq:lemma4_4eq_1}) is bounded by induction, so we turn our attention to the second summand. 
	Let $\sigma$ be a $k$-subset of $[n]$, and $V_\sigma\coloneqq\{p_i\mid i\in\sigma\}$ the corresponding set of vertices, forming a
        $(k-1)$-simplex.
	Let $N_\sigma$ denote the random variable given by the number of $k$-simplices that contain $V_\sigma$ as a face. 
	It then follows that
	\[
	N_k=\frac{1}{k+1}\sum_{\sigma}N_\sigma \, .
	\]	
	A priori, there are $\binom{n}{k}$ choices for $\sigma$.
	However, $N_\sigma>0$ only if $V_\sigma$ is a $(k-1)$-simplex in the \v Cech complex. 
	Therefore, by the second condition in the event $(N_k\geq \lambda_k n \log(n)^k \wedge N_ {k-1} < \lambda_{k-1} n \log(n)^{k-1})$, the sum
	only has $\lambda_{k-1} n (\log n)^{k-1}$ non-zero summands. 
	By the first condition in the said event, one summand has to be ``large'' in the following sense:
	
	\begin{equation}\label{eq:lemma4_4eq_2}
		\begin{aligned}
			\mathbb{P}\left(N_k\geq \lambda_k n \log^kn \wedge N_{k-1} < \lambda_{k-1} n \log^{k-1}n\right)
			&\leq \mathbb{P}\left(\exists \sigma\colon N_\sigma\geq (k+1)\frac{\lambda_k n(\log n)^k}{\lambda_{k-1} n (\log n)^{k-1}}\right)
			\\
			&= \mathbb{P}\left(\exists \sigma\colon N_\sigma\geq (k+1)\frac{\lambda_k}{\lambda_{k-1}}\log n\right)
			\\
			&\leq \sum_{\sigma} \mathbb{P}\left(N_\sigma\geq (k+1)\frac{\lambda_k}{\lambda_{k-1}}\log n\right)
			\\
			&\leq n^k \mathbb{P}\left(N_{[k]}\geq (k+1)\frac{\lambda_k}{\lambda_{k-1}}\log n\right)
		\end{aligned}
	\end{equation}
	Here we have used the union bound and the fact that the probability
	of $N_\sigma$ is the same for every $\sigma$ because the points are drawn
	independently. 
	
	In order to bound the last term in (\ref{eq:lemma4_4eq_2}), we notice that, for $N_{[k]}$ to be large, we have to sample many points close to $p_1$:
	Every $k$-simplex that contains $p_1,\ldots,p_k$ contains one further
	point $p_m$ with $m>k$, and $p_m$ has a distance of at most $2\alpha$
	to $p_1$, as otherwise, there would not be an edge $p_1p_m$ in the \v Cech complex.
	Therefore, we have
	\begin{align*}
		\mathbb{P}(N_{[k]}\geq (k+1)\frac{\lambda_k}{\lambda_{k-1}}\log n)
		&\leq  \mathbb{P}(\text{at least }(k+1)\frac{\lambda_k}{\lambda_{k-1}}\log n \text{ points in }\{p_{k+1},\ldots,p_n\}\text{ lie in } B_{2\alpha}(p_1))
		\\
		&\leq \mathbb{P}(\text{at least }(k+1)\frac{\lambda_k}{\lambda_{k-1}}\log n \text{ points among }n-k\text{ i.i.d. points lie in a $2\alpha$-ball})  
	\end{align*}
	
	The last line corresponds to a sum of i.i.d. Bernoulli random variables. The success probability of each of them
	is the volume of the $2\alpha$-ball $B_{2\alpha}$ given by $2^dc^d\mu\frac{\log n}{n}$, where $\mu$
	denotes the volume of the $d$-dimensional unit ball. The expectation of the sum is then $C\log n$ for some constant $C$. This situation allows for the application of a Chernoff bound (Eq. $1.$ in Theorem $4.4$ in \cite{mitzenmacher2005probability}, applied with the inequality $\frac{2\delta}{2+\delta}\leq \log(1+\delta)$). 
	We get
	\[
	\mathbb{P}\left(\text{at least $(1+\delta)C\log n$ points are sampled in }B_{2\alpha}\right) 
	\leq e^{-\frac{\delta^2C\log n}{2+\delta}}
	\leq n^{-\delta^2C'}
	\leq n^{-(\ell+k+1)}
	\]
	for $\delta$ chosen appropriately and $C'$ is a constant.
	
	We now chose $\lambda_k$ such that $(k+1)\frac{\lambda_k}{\lambda_{k-1}}\geq (1+\delta)C$. 
	Thus, putting everything together:
	
	\begin{align*}
		\mathbb{P}\left(N_k\geq \lambda_k n \log^k n \wedge N_{k-1} < \lambda_{k-1} n \log^{k-1}n\right)&
		\leq n^k \mathbb{P}(N_{[k]}\geq (k+1)\frac{\lambda_k}{\lambda_{k-1}}\log n)\\
		&\leq n^k \mathbb{P}\left(\text{at least $(1+\delta)C\log n$ points are sampled in }B_{2\alpha}\right)\\
		&\leq n^k n^{-(\ell+k+1)}\\
		&= n^{-(\ell+1)}.
	\end{align*}
	This implies that
	\[
	\mathbb{P}(N_k\geq\lambda_k n\log^k n) \leq\frac{1}{n^{\ell+1}} + \mathbb{P}\left(N_k\geq \lambda_k n \log^k n \wedge N_{k-1} < \lambda_{k-1} n \log^{k-1}n\right)\leq \frac{1}{n^{\ell+1}}+\frac{1}{n^{\ell+1}}\leq\frac{1}{n^\ell} \qedhere
	\]
\end{proof}

\section{Proof of Lemma~\ref{lemma_rips_beta}}\label{A_lemma_VR}
In this section we will derive our adaptation of \cite{kahle2011random} Theorem $6.2$. 
Recall the statement:
\begin{lemma-non} 
Given positive integers $\ell$ and $k$, there exists a constant $c>0$ such that for  $\alpha \geq c {\left(\frac{\log(n)}{n}\right)}^{1/d}$:
\begin{equation}
\mathbb{P} \left( \beta_k(\text{VR}(\mathcal{X}_n,\alpha)) \neq 0 \right) \leq \frac{1}{n^\ell}
\end{equation}
for sufficiently large $n$. 
\end{lemma-non}

\begin{proof}

The proof relies on discrete Morse theory. The fundamental theorem of discrete Morse theory (see Theorem $2.5$ of \cite{forman2002user}) says that a simplicial complex with a discrete gradient vector field $V$ is homotopy equivalent to a CW complex with one cell of dimension $k$ for each pivotal $k$-dimensional simplex. 
The defintion of cellular homology gives that $\beta_k$ is smaller than the number of $k$-cells in the given CW complex. 
Thus, if for a constant $c>0$ and $\alpha>c{\left(\frac{\log n}{n}\right)}^{1/d}$ a given discrete gradient vector field on $\VRcompl$ has no pivotal simplices, then $\beta_k(\VRcompl)=0$. 
By contraposition, $\beta_k(\VRcompl)\neq 0$ implies that there exists a pivotal simplex in the Vietoris--Rips complex. 
The probability of this event will be bounded in the following. 

First a definition from discrete Morse theory: A \define{discrete vector field} $V$ of a simplicial complex $K$ is a collection of pairs of faces $\{ \sigma \subseteq \tau\}$ of $K$ such that each face is in at most one pair. A closed $V$-path in a discrete vector field is then a sequence of faces 
\begin{equation*}
\sigma_0\subseteq \tau_0 \supseteq \sigma_1 \subseteq \tau_1 \supseteq\cdots \subseteq \tau_n \supseteq \sigma_{n+1}
\end{equation*}
with $\sigma_{i+1}\neq \sigma_i$ such that $\{\sigma_i\subseteq \tau_i\}\in V$ for $i=0,\dots,n$ and $\sigma_{n+1}=\sigma_0$. $V$ is a \define{discrete gradient vector field} is there are no closed $V$-paths. 
Any simplex $\nu\in K$ not in any pair of $V$ is called \define{critical}. 

We start by indexing the points in $\mathcal{X}_n$ by distance to the origin, i.e. $\lVert X_1 \rVert < \lVert X_2 \rVert < \cdots < \lVert X_n \rVert$. Note that no two points have that same distance almost surely. Now define a discrete vector field $V$ on $\VRcompl$ by - whenever possible - pairing a face $S=\{X_{i_1},X_{i_2},\dots,X_{i_k}\}$ with a face $\{X_{i_0}\}\cup S$ with $i_0<i_1$ and $i_0$ as small as possible. $V$ is well defined as each face gets paired at most once: A face $S$ cannot be paired to two higher dimensional faces $\{X_a\}\cup S$ and $\{X_b\}\cup S$ as the pairing will be performed with the smaller index $\mathrm{min}(a,b)$. It is also not possible for $S$ to get paired with both a higher and a lower dimensional face: Suppose that $S$ gets paired with $\{X_a\}\cup S$. Then $\lVert X_a \rVert < \lVert X \rVert$ for every $X\in S$. In this case, no codimension $1$ face $F\subseteq S$ can be paired with $S$, since the pairing $\{X_a\}\cup F$ would be preferred. $V$ is even a discrete gradient vector field because the indices are decreasing along any $V$-path. 

Given a $k$-dimensional simplex in $\VRcompl$, $\sigma=\{X_{i_1},\dots,X_{i_{k+1}}\}$, this  simplex is critical, that is, unpaired in $V$ if
\begin{enumerate}[label=(\roman{enumi})]
\item there is no common neighbor $X_a$ with $a<i_1$ to the vertices of $\sigma$, or else the simplex would be paired up by adding such a point with smallest index, or
\item $\sigma$ would be paired up with $\{X_{i_2},\dots,X_{i_{k+1}}\}$ unless $\{X_{i_2},\dots,X_{i_{k+1}}\}$ has a common neighbor with index smaller than $i_2$. 
\end{enumerate}

Assume now that $X_{i_0}$ is a common neighbor of $\sigma$ as in in $(i)$ and let $\alpha \geq c {\left(\frac{\log(n)}{n}\right)}^{1/d}$ where $c>0$ is a constant defined later, but depending only on $k,\ell$ and $d$. Then $\lVert X_{i_1}\rVert \geq \frac{1}{2}\alpha$, as otherwise $\lVert X_{i_0}-X_{i_1}\rVert < \alpha$ or $\lVert X_{i_0} \rVert > \lVert X_{i_1} \rVert$ would contradict our assumptions. Further, $\lVert X_{i_0}-X_{i_1}\rVert > \alpha$ and $\lVert X_{i_j}-X_{i_t}\rVert \leq \alpha$ for $0\leq j < t \leq k+1$. Then, by the (technical) Lemma $5.3$ in \cite{kahle2011random}, the Lebesgue measure of the intersection
\begin{equation*}
I=\bigcap_{j=1}^{k+1}B_\alpha(X_{i_j})\cap B_{\lVert X_{i_1} \rVert}(0)
\end{equation*}
can be bounded: $\mu(I)\geq \epsilon_d \alpha^d$ with $\epsilon_d>0$ depending only on $d$, i.e. being a constant for us. 
If any vertices fall into the interseciton $I$, then this vertex would be a common neighbor of $\sigma$ with index smaller than $i_1$ and thus $\sigma$ would be paired. 
If $\sigma$ is critical, then $\sigma$ is unpaired and thus $(i)$ has to be satisfied, i.e. no vertices of $\VRcompl$ may lie in $I$. 
Since the points of $\mathcal{X}_n$ are i.i.d. uniformly sampled from the unit cube $[\frac{1}{2},\frac{1}{2}]^d$, the probability of this event is $(1-\mu(I))^{n-k-2}$. 
We have: 
\begin{equation*}
\begin{split}
&\mathbb{P}(\beta_k(\VRcompl)\neq 0) \leq \mathbb{P}(\text{there exists a critical k-simplex }\sigma\in \VRcompl)\\ &\leq \binom{n}{k+1}\left(1-\mu(I)\right)^{n-k-2} \leq \binom{n}{k+1}\left(1-\epsilon_d\alpha^d \right)^{n-k-2} \leq \binom{n}{k+1}\exp \left(-\epsilon_d \alpha^d (n-k-2) \right)\\&\leq Cn^{k+1}\exp \left(-\epsilon_d c^d \frac{\log(n)}{n}(n-k-2) \right) \leq \frac{1}{n^\ell}
\end{split}
\end{equation*}
for $c=(\frac{l+k+1}{\epsilon_d})^{1/d}$ and large enough $n$. 
\end{proof}

\section{Proof of Lemma~\ref{lemma_er_beta}}\label{A_lemma_ER}
In this section, we prove \cref{lemma_er_beta}, that we restate as \cref{lemma_er_beta_appendix}. 
This result is very close to the one stated in \cite[Theorem 1.2]{demarco2013triangle}.
The main difference is the precision of the bound on $\Prob{\beta_1(\ERcompl) > 0}$: in our case, we need a bound of $\mathcal{O}(\ver^{-4})$.
Nevertheless, the proof given in  \cite{demarco2013triangle} can be slightly adapted to give the desired bound, at the expense of increasing $c$. We first adapt our notation. Remember, that $\ERcompl$ is the clique complex induced by the Erd\H{o}s-R\'enyi graph $G(\ver,\alpha)$. We will write $X=X(G)=\ERcompl$ and $p:=\alpha$ for the rest of this section.
\begin{lemma} \label{lemma_er_beta_appendix}
There are constants $\kappa > 0$ and $c > 0$ such that if $p > c \cdot \sqrt{\frac{\log\ver}{\ver}}$, $G \sim G(\ver,p)$ and $X$ is the clique complex of $G$, then:
\begin{equation*}
    \Prob{\betti_1(X) > 0} < \kappa \cdot \ver^{- 4}\, .
\end{equation*}
\end{lemma}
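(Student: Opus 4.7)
The strategy is to follow the proof of \cite[Theorem 1.2]{demarco2013triangle} essentially unchanged, but to track the constants carefully so as to turn their $o(1)$ failure probability into a polynomial rate $O(\ver^{-4})$. Their argument proceeds by isolating a conjunction of structural events on $G$ (each of the form ``every edge/link/short cycle of a given type behaves typically'') whose intersection forces every $1$-cycle of $G$ to be a $\mathbb{Z}_2$-sum of triangles of $G$, and hence $\beta_1(X) = 0$. The individual failure probabilities are controlled by Chernoff-type estimates on binomial counts (numbers of triangles through an edge, numbers of common neighbours of a pair, degrees of vertices in the link, etc.), each followed by a union bound over the polynomially many relevant sub-configurations.

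The key observation is that each of these estimates has the shape ``this event fails with probability at most $n^{O(1)} \exp(-c \rho^2 \log n) = n^{O(1) - c \rho^2}$'' for some absolute constant $c > 0$ depending on the event but independent of $\rho$. Consequently, once we fix $\rho$ to be large enough so that $c \rho^2$ exceeds, say, the combinatorial exponent plus $5$ in every one of these estimates, each individual event fails with probability at most $O(\ver^{-5})$. Taking the union over the finitely many types of events used in \cite{demarco2013triangle} then gives $\Prob{\betti_1(X) > 0} = O(\ver^{-4})$, as desired. Concretely, the first step of the proof would be to enumerate the structural events appearing in the DeMarco--Hamm--Kahn argument, check for each that the failure probability is of the form $n^{O(1) - c \rho^2}$, and fix $\rho$ to be the maximum of the thresholds coming out of these estimates.

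The second step is to re-verify each probability bound with the enlarged $\rho$. For the ``every edge is in many triangles'' type events this is a direct Chernoff calculation on a $\mathrm{Bin}(\ver-2, p^2)$ variable with mean $\sim \rho^2 \log \ver$, whose lower tail is bounded by $\exp(-\Theta(\rho^2 \log \ver)) = \ver^{-\Theta(\rho^2)}$. The more delicate estimates in \cite{demarco2013triangle} involve conditioning on the edges revealed so far and bounding the probability that certain ``link graphs'' fail to be connected or well-expanding; the same scaling in $\rho$ applies because these events are again controlled by binomial concentration on counts with mean proportional to $\rho^2 \log \ver$ (or higher powers).

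The main obstacle is purely bookkeeping: \cite{demarco2013triangle} decomposes the event $\{\beta_1(X) > 0\}$ via several nested conditional arguments, and one has to verify that enlarging $\rho$ uniformly is enough to drive every sub-bound below $\ver^{-5}$, with no step of the proof relying on a ``just barely $o(1)$'' estimate that cannot be amplified this way. In particular, one should double-check their applications of Janson's inequality and of the second moment method, which sometimes give bounds of the form $\exp(-\Theta(\rho^2 \log \ver) / \mathrm{polylog}(\ver))$; the logarithmic factor in the denominator is harmless here, as it only inflates the required $\rho$ by a constant. Once this verification is complete, the lemma follows, with resulting constants $\rho$ and $\kappa$ that are substantially larger than the $\rho = 3/2$ of \cite{demarco2013triangle} but still finite, which is all that Main Theorem~\ref{mainthm_er} needs.
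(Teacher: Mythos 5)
Your proposal matches the paper's own strategy exactly: the appendix re-traces DeMarco--Hamm--Kahn's argument, observes that the relevant Hoeffding/Chernoff bounds (Propositions 2.3, 2.4, 2.6 of that paper) all scale as $\exp(-\gamma\, n p^2) = n^{-\gamma \rho^2}$ or faster once $p > \rho\sqrt{\log n / n}$, and increases $\rho$ to push every such exponent below $-4$ after the polynomial union bounds. One small caveat in your last paragraph: a bound of the form $\exp(-\Theta(\rho^2\log n)/\mathrm{polylog}\,n)$ would \emph{not} be salvageable by enlarging $\rho$ by a constant (for example $\exp(-\rho^2\log n/\log n)$ is bounded away from $0$ for any fixed $\rho$), so such a term would genuinely break the argument; fortunately, no such bound arises in the DeMarco--Hamm--Kahn proof, which relies only on binomial concentration, so this point is moot here.
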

\begin{proof}[Sketch of the proof]
Since 
\[
\Prob{\betti_1(X) > 0} = \Prob{\Cspace \neq \Tspace} = \Prob{\Cspace^\perp \neq \Tspace^\perp} \, 
\] 
where $\Cspace$ and $\Tspace$ denote the cycle and the triangle space of $G$, respectively, and $^\perp$ denote the orthogonal with respect to the usual inner product, we will prove the bound for the right-most probability.
To do this, we need two lemmas (originally stated in \cite[Section 3]{demarco2013triangle}): \cref{lemma_3_1} guarantees that, under some assumptions on $G$, it is highly unlikely that there is a small element in $\Tspace^\perp \setminus \Cspace^\perp$; \cref{lemma_3_3} gives us a property which is satisfied by every graph that is large enough. 

Finally, we show that if there is a large element $F$ in $\Tspace^\perp \setminus \Cspace^\perp$, then we can build a graph $B$ that is very likely to satisfy the hypotheses of \cref{lemma_3_3} but is very unlikely to satisfy its thesis.
Therefore, $F$ is unlikely to exist, and the claim follows.
\end{proof}

Throughout this section, we set $G \sim G(\ver,p)$, and we use the following notations:
\begin{itemize}
\item $V$ is the set of vertices of $G$ and $\fullG{\ver}$ is the clique whose vertices are the $\ver$ elements of $V$;
\item The number of edges in the graph $H$ is denoted by $|H|$;
\item For $x,y \in V$, $\neigh{H}{x}$ is the neighborhood of $x$ in $H$, $\neigh{H}{x,y}\coloneqq \neigh{H}{x} \cap \neigh{H}{y}$ is the set of common neighbors of $x$ and $y$, and we write $d_H(x) = |\neigh{H}{x}|$ and $d_H(x, y) = |\neigh{H}{x, y}|$;
\item If $S$ and $T$ are two sets of vertices, $\cut{S, T}$ is the set of edges joining $S$ and $T$ in $G$, i.e., the set of edges that have one end in $S$ and the other in $T$. We also write $\cut{S} = \cut{S, V - S}$ and $\cut{v} = \cut{H,\{ v \}}$ for any $v \in V$;
\item $\nbtri{H}$ is the number of triangles of $H$.
\end{itemize}
Moreover, if we do not specify the graph $H$ in the above list, we mean $H=G$.
\medskip

For ease of reference, we now display a series of results initially proved in \cite{demarco2013triangle}, which all follow from Hoeffding inequalities for binomial laws (see \cite[Theorem 2.1]{chernoff_ref}).
We adapt their notation to our goals, namely by setting their $o$ bounds as precise values for the inequalities, which are then true when $\ver$ is large enough. 
These results will be used both in the proof of \cref{lemma_3_1} and in the proof of \cref{lemma_er_beta_appendix}, where they will give us the necessary bounds provided that we choose fixed values for $\epsilon$ that are small enough.
\medskip

The first proposition claims, in particular, that the probability that an ER graph has ``many'' edges is low.
\begin{proposition}[Proposition 2.3 of \cite{demarco2013triangle}] \label{proposition_2_3}
For every $\epsilon > 0$,
\begin{equation*}
    \Prob{\abs{|G| - \frac{\ver^2p}{2}} > \epsilon \frac{\ver^2p}{2}} \leq \exp \left(- \frac{\epsilon^2}{8} \ver^2p \right),
\end{equation*}
\begin{equation*}
    \Prob{\exists v \in V \text{ such that } \abs{d(v) - \ver p} > \epsilon \ver p} \leq \ver \cdot \exp \left(- \frac{\epsilon^2}{4} \ver p \right)\, .
\end{equation*}
Moreover, for some $\gamma > 0$,
\begin{equation*}
    \Prob{\exists v, w \in V \text{ such that } d(v, w) \geq 4 \ver p^2} \leq \exp \left(- \gamma \ver p^2 \right)\, .
\end{equation*}
\end{proposition}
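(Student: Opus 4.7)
The three inequalities are essentially Chernoff concentration statements for carefully chosen binomial random variables; I would derive all of them from the bound cited as Theorem 2.1 of \cite{chernoff}, together with a union bound in the second and third cases.

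For the first inequality, observe that $|G|=\sum_{e\in\binom{V}{2}}\indicator_{e\in G}$ is a sum of $\binom{\ver}{2}$ independent Bernoulli$(p)$ variables, i.e., $|G|\sim B\bigl(\binom{\ver}{2},p\bigr)$ with mean $\binom{\ver}{2}p=\ver^2p/2-\ver p/2$. A standard application of the multiplicative Chernoff bound to the deviation $\epsilon\ver^2p/2$ yields a probability of order $\exp(-c\epsilon^2\ver^2p)$; tracking the constants carefully and absorbing the $O(\ver p)$ gap between the true mean $\binom{\ver}{2}p$ and $\ver^2p/2$ produces the stated denominator $8$, for $\ver$ sufficiently large.

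For the second inequality, fix $v\in V$ and note that $d(v)=\sum_{u\neq v}\indicator_{uv\in G}$ is $B(\ver-1,p)$ with mean $(\ver-1)p$. Chernoff yields a bound of the form $2\exp(-\epsilon^2\ver p/c)$ on the event $|d(v)-\ver p|>\epsilon\ver p$, after which a union bound over the $\ver$ choices of $v$ gives the claimed inequality (the factor $2$ being absorbed into the constant $4$ in the exponent by enlarging $\ver$ if needed).

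The third inequality is where I expect the only real subtlety. For fixed $v\neq w$ and any $u\notin\{v,w\}$, the event $u\in\neigh{G}{v,w}$ is equivalent to both edges $uv$ and $uw$ lying in $G$, which has probability $p^2$; these events are independent across $u$, so $d(v,w)\sim B(\ver-2,p^2)$ with mean $\approx\ver p^2$. Applying the one-sided upper-tail Chernoff bound to $d(v,w)\geq 4\ver p^2$ (a multiplicative deviation of factor $3$ above the mean) gives a probability of order $\exp(-c\ver p^2)$, and a union bound over the $\binom{\ver}{2}$ pairs contributes a factor $\ver^2$. The main obstacle is absorbing this factor back into the exponent to reach the clean form $\exp(-\gamma \ver p^2)$: this is only possible when $\ver p^2$ dominates $\log \ver$, which is precisely the regime $p\gtrsim\sqrt{\log \ver/\ver}$ supplied by the surrounding application (with $\rho$ chosen large enough); outside this regime the stated inequality is either vacuous or irrelevant, so it suffices to prove it there.
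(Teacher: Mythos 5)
Your approach is correct and is essentially the route the paper implicitly takes: the paper offers no proof of its own for this proposition, only the remark that all three inequalities "follow from Hoeffding inequalities for binomial laws" via the cited Theorem 2.1 of Janson--Luczak--Ruci\'nski, which is exactly the Chernoff-type bound you apply to $|G|\sim B(\binom{\ver}{2},p)$, $d(v)\sim B(\ver-1,p)$, and $d(v,w)\sim B(\ver-2,p^2)$ together with union bounds. Your observation about the third inequality --- that absorbing the $\ver^2$ union-bound factor into $\exp(-\gamma\ver p^2)$ requires $\ver p^2\gtrsim\log\ver$, which is exactly the regime $p>\rho\sqrt{\log\ver/\ver}$ in which the paper invokes the proposition --- is a correct and useful caveat that the paper glosses over with the phrase "true when $\ver$ is large enough."
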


The following proposition shows two statements. First, that the event, that in an ER graph there are two big, disjoint sets of vertices with many more or many fewer edges between them than expected has low probability. 
Second, that such an estimate can be extended to a set of vertices in $G$ of arbitrary size as well as its complement. 
\begin{proposition}[Proposition 2.4 of \cite{demarco2013triangle}] \label{proposition_2_4}
For each $\delta > 0$, there is a $K > 0$ such that
\begin{align*}
    & \Prob{\exists S, T \subset V \text{ such that } \Big[ S \cap T = \emptyset \Big]  \land \Big[ |S|, |T| > Kp^{-1} \log\ver \Big] \land \Big[ \abs{|\cut{S, T}| - |S||T|p} > \delta |S||T|p \Big]} \\ 
    & \qquad \leq \exp(-\gamma_{\delta} \ver \log\ver p) \, ,
\end{align*}
and 
\[\Prob{\exists S\subset V, \abs{|\cut{S}| - |S|(\ver-|S|)p} > \delta \cdot |S|(\ver-|S|)p} \leq \exp(-\gamma_{\delta} \ver p) \, ,
\]
where, in both cases, $\gamma_\delta$ is a constant depending only on $\delta$.
\end{proposition}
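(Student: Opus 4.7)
The plan is to prove each inequality by combining a Chernoff bound on the relevant cut random variable with a union bound over vertex subsets, the size cutoff $Kp^{-1}\log\ver$ being chosen large enough that the Chernoff savings dominate the combinatorial entropy of the union bound.

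For the first inequality, fix disjoint $S, T \subseteq V$ of sizes $s$ and $t$. Because the $st$ potential edges between $S$ and $T$ appear independently with probability $p$, we have $|\cut{S, T}| \sim \mathrm{Bin}(st, p)$, and the multiplicative Chernoff bound (in the form of \cite[Theorem 2.1]{chernoff}) yields a constant $c_\delta > 0$ depending only on $\delta$ with
\[
\Prob{\abs{|\cut{S, T}| - stp} > \delta stp} \leq 2\exp(-c_\delta stp).
\]
Union-bounding over all disjoint pairs of prescribed sizes using $\binom{\ver}{s}\binom{\ver-s}{t} \leq \exp((s+t)\log\ver)$, and summing over $s, t \geq L \coloneqq Kp^{-1}\log\ver$, produces
\[
\sum_{s, t \geq L} 2 \exp\bigl((s+t)\log\ver - c_\delta stp\bigr).
\]
The cutoff $L$ forces $stp = s \cdot tp \geq s \cdot K\log\ver$ and symmetrically $stp \geq t \cdot K\log\ver$, hence $stp \geq \tfrac{K}{2}(s+t)\log\ver$. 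Choosing $K$ so that $c_\delta K \geq 4$ makes each exponent at most $-(s+t)\log\ver$, collapsing the sum to a geometric tail whose leading term is of order $\exp(-2L\log\ver) = \exp(-2K p^{-1}\log^2\ver)$. Comparing this with the target $\exp(-\gamma_\delta \ver p\log\ver)$ in the regime $p = \Omega(\sqrt{\log\ver/\ver})$ of \cref{lemma_er_beta_appendix} fixes $\gamma_\delta$ as a function of $\delta$ alone.

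For the second inequality, the same recipe applies with a single subset. For $|S|=s$, $|\cut{S}| \sim \mathrm{Bin}(s(\ver-s), p)$, and Chernoff gives $2\exp(-c_\delta s(\ver-s)p)$. By symmetry one may restrict to $s \leq \ver/2$, whence $s(\ver-s) \geq s\ver/2$, and the union bound becomes
\[
2 \sum_{s=1}^{\lfloor \ver/2 \rfloor} \exp\bigl(s\log\ver - \tfrac{1}{2} c_\delta s \cdot \ver p\bigr).
\]
In the regime of interest, $\ver p = \Omega(\sqrt{\ver\log\ver}) \gg \log\ver$, so the coefficient of $s$ is negative with magnitude at least $\tfrac{1}{4} c_\delta \ver p$; the sum is thus a geometric series dominated by its first term $s=1$, yielding the bound $\exp(-\gamma_\delta \ver p)$.

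The main obstacle is the bookkeeping around the constants: $K$ must be chosen independent of $\ver$ so that Chernoff beats the $(s+t)\log\ver$ (respectively $s\log\ver$) entropy uniformly for all admissible $s, t$, and the exponent $2K p^{-1}\log^2\ver$ (respectively $\tfrac{1}{4} c_\delta \ver p$) that the geometric sum naturally produces must be converted into the requested form $\exp(-\gamma_\delta \ver p\log\ver)$ (respectively $\exp(-\gamma_\delta \ver p)$). This conversion uses the standing lower bound $p \geq \rho\sqrt{\log\ver/\ver}$ from \cref{lemma_er_beta_appendix} and allows $\gamma_\delta$ to depend on the threshold constant $\rho$ in addition to $\delta$.
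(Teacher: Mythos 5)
The paper does not actually prove this proposition: it is reproduced from \cite{demarco2013triangle} and the authors only note in passing that the statements ``all follow from Hoeffding inequalities for binomial laws.'' So your Chernoff-plus-union-bound strategy is indeed the expected route, and there is no distinct proof in the paper to compare against. The argument you give for the second inequality is sound: once $\ver p \gg \log\ver$ (which the standing lower bound on $p$ guarantees), the geometric series is dominated by $s=1$ and the bound $\exp(-\gamma_\delta \ver p)$ follows.

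Your treatment of the first inequality, however, has a genuine gap in the final ``conversion'' step, and the gap runs in exactly the opposite direction to the fix you propose. The geometric-tail leading term is $\exp(-2K p^{-1}\log^2\ver)$, and the inequality $2K p^{-1}\log^2\ver \geq \gamma_\delta \ver p \log\ver$ is equivalent to $\ver p^2 \leq (2K/\gamma_\delta)\log\ver$, i.e.\ to an \emph{upper} bound $p \lesssim \sqrt{\log\ver/\ver}$. The lower bound $p \geq \rho\sqrt{\log\ver/\ver}$ that you invoke at the end does not help; if anything it is the wrong side of the threshold. The issue is not cosmetic: for, say, $p$ a fixed constant, a single disjoint pair $(S,T)$ of the minimal admissible size $|S|=|T|=L=Kp^{-1}\log\ver=\Theta(\log\ver)$ already deviates with probability $\exp(-\Theta(\delta^2 L^2 p))=\exp(-\Theta(\log^2\ver))$, and this is a \emph{lower} bound on the failure probability of the whole event. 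That is far larger than $\exp(-\gamma_\delta \ver p\log\ver)=\exp(-\Theta(\ver\log\ver))$, so the stated bound cannot be proved (and in fact fails) once $p$ is well above the $\sqrt{\log\ver/\ver}$ scale. The proposition as transcribed is implicitly a statement about the critical regime $p=\Theta(\sqrt{\log\ver/\ver})$ (which is all the paper uses via Lemma~\ref{lemma_3_1}, whose target $\ver^{-\gamma\rho}$ is still implied by $\exp(-\Theta(\log^2\ver))$); your write-up should say that the conversion needs $p=O(\sqrt{\log\ver/\ver})$, not appeal to the lower bound on $p$, which gives nothing here.
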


The following proposition is similar to \cref{proposition_2_4}, but now the sets $S$ and $T$ partition the neighborhood set of a vertex in the graph. 
In particular, this result shows that, in an ER graph, the probability that there exists a vertex with a set of neighbors whose graph has many vertices or that there exists a vertex with a set of neighbors with few vertices and many edges are low.
\begin{proposition}[Proposition 2.6 of \cite{demarco2013triangle}] \label{proposition_2_6}
There is a $K > 0$ such that
\begin{equation*}
    \Prob{\exists v \in V, \exists S \subset \neighG{v} T = \neighG{v} \backslash S \text{ such that } \abs{ |\cut{S, T}| - |S||T|p} > K \ver^{3/2}p^2} \leq \exp(-\gamma_K \ver p)
\end{equation*}
where $\gamma_K$ is a constant depending only on $K$, and such that
\begin{equation*}
    \Prob{\exists v \in V, \exists S \subset \neighG{v} \text{ such that } |\spannedG{G}{S}| > \frac{|S|^2 p}{2} + K \ver^{3/2}p^2} \leq \exp(-\gamma_K \ver p)
\end{equation*}
where $\spannedG{G}{S}$ denotes the subgraph of $G$ induced by the vertices $S$ and $\gamma_K$ is a constant depending only on $K$. 
Moreover, for every  $\epsilon > 0$,
\begin{equation*}
    \Prob{\exists v \in V, \exists S \subset \neighG{v} \text{ such that } \Big[ |S| < \epsilon \ver p^2 \Big] \land  \Big[ |\spannedG{G}{S}| > \epsilon |S| \ver p^2 \Big]} \leq \exp(- \frac{\epsilon^2}{4} \ver^2 p^2) \, .
\end{equation*}
For every $\epsilon > 0$, there is a $\gamma > 0$ such that
\begin{align*}
    & \Prob{\exists v \in V, \exists S \subset \neighG{v}, T = \neighG{v} \backslash S \text{ such that } \Big[ 2 \leq |S| \leq |T| \Big] \land \Big[ |\cut{S, T}| \leq (\frac{1}{2} - \epsilon) |S| \ver p^2 \Big]} \\ 
    & \qquad \leq \exp(- \gamma \cdot \ver p^2) \, .
\end{align*}
Finally, there is a $K > 0$ and a $\gamma > 0$ such that
\begin{align*}
    & \Prob{\exists v \in V, \exists S, T \subset \neighG{v} \text{ such that } \Big[ S \cap T = \emptyset \Big] \land \Big[ |T| > \ver p/3 \Big] \land \Big[ |S| > K/p \Big] \land \Big[ |\cut{S, T}| \leq 0.9 |S||T|p \Big]} \\ &  \qquad \leq \exp(-\gamma \ver p) \, .
\end{align*}
\end{proposition}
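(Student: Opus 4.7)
The five inequalities of \cref{proposition_2_6} share a common structure, and I would prove them by a uniform scheme. For each fixed vertex $v\in V$, I first condition on the random set $\neighG{v}$: since edge indicators are mutually independent, $\neighG{v}$ is independent of the $\binom{|\neighG{v}|}{2}$ potential edges lying inside it, which remain i.i.d.\ Bernoulli$(p)$ after conditioning. Consequently, for any partition $(S,T)$ of $\neighG{v}$ and any $S\subseteq\neighG{v}$, the variables $|\cut{S,T}|$ and $|\spannedG{G}{S}|$ are binomials with parameters $|S||T|\cdot p$ and $\binom{|S|}{2}\cdot p$, respectively. To each I would apply Chernoff's bound in the form used in the proof of \cref{lem:vr_chernoff}: its additive version for the $K\ver^{3/2}p^2$-deviations of the first two inequalities, and its multiplicative version for the constant-fraction deviations of the remaining three.

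Once these per-configuration tail bounds are in place, the remainder of the proof is a careful union bound. By \cref{proposition_2_3} I may restrict to the event $\{d(v)\le 2\ver p\text{ for all }v\in V\}$, whose complement has probability at most $\ver\exp(-\Omega(\ver p))$ and is absorbable into the claimed bound. On this event the number of triples $(v,S,T)$ to consider is at most $\ver\cdot 4^{2\ver p}=\exp(O(\ver p))$. For the first two inequalities the per-triple Chernoff exponent is $(K\ver^{3/2}p^2)^2/(|S||T|p)=\Omega(K^2\ver p)$ since $|S||T|\le(\ver p)^2$, so taking $K$ large enough dominates the enumeration factor and yields an overall bound of $\exp(-\Omega(\ver p))$. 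For the remaining three I would stratify the enumeration over $|S|=s$: there are at most $(2\ver p)^s$ subsets of that size for each $v$, while the Chernoff exponent scales like $s\cdot\ver p^2$ or $s\cdot\ver p$ thanks to the assumptions on $|T|$, so choosing the relevant constant ($\epsilon$ or $K$) large enough makes the enumeration factor negligible.

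The main obstacle is parts four and five, which ask for a \emph{lower} bound on $|\cut{S,T}|$ proportional to its expectation rather than an absolute-value deviation. Here the expected cut $|S||T|p$ must exceed $|S|\cdot\ver p^2$ up to a constant so that the multiplicative Chernoff exponent beats the $(2\ver p)^{|S|}$ enumeration. This is guaranteed by the assumptions: in part five, $|T|\ge\ver p/3$ yields $|S||T|p\ge|S|\cdot\ver p^2/3$, and in part four the assumption $|T|\ge|S|\ge 2$ combined with $\ver p^2\gg 1$ in our regime $p\gg\sqrt{\log\ver/\ver}$ suffices. Threading the constants through all five parts to obtain a single exponent $\gamma$ depending only on $K$ (or on $\epsilon$) is purely bookkeeping; the conceptual core is the conditioning step that decouples $\neighG{v}$ from the randomness it spans.
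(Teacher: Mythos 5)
The paper does not prove this proposition: it is imported verbatim (modulo translating the $o(\cdot)$ bounds into explicit constants for large $\ver$) as Proposition~2.6 of~\cite{demarco2013triangle}, with the Appendix noting only that all three Propositions~\ref{proposition_2_3}--\ref{proposition_2_6} ``follow from Hoeffding inequalities for binomial laws.'' Your proposal fills in exactly that omitted argument, and the scheme you describe --- condition on $\neighG{v}$ so that the edges among neighbours remain i.i.d.\ Bernoulli$(p)$, apply Chernoff to the resulting binomials $|\cut{S,T}|$ and $|\spannedG{G}{S}|$, then union bound over $v$ and over subsets of $\neighG{v}$ after restricting to the high-probability event $\max_v d(v)\le 2\ver p$ via Proposition~\ref{proposition_2_3} --- is the standard route and is the one used in~\cite{demarco2013triangle}. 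Your identification of the two qualitatively different regimes (additive Chernoff for the $\ver^{3/2}p^2$-deviation bounds, multiplicative Chernoff for the lower-tail bounds in the last two parts, where the assumptions on $|T|$ supply the needed $\Omega(|S|\ver p^2)$ expectation) is also correct.

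One small caveat on bookkeeping: you say the degree-restriction error $\ver\exp(-\Omega(\ver p))$ is ``absorbable into the claimed bound'' uniformly, but the third inequality claims $\exp(-\frac{\epsilon^2}{4}\ver^2 p^2)$, which in the regime $p\gtrsim\sqrt{\log\ver/\ver}$ is much \emph{smaller} than $\exp(-\Omega(\ver p))$, so that absorption step does not directly apply there. This turns out to be harmless but for a somewhat odd reason: under the stated hypothesis $|S|<\epsilon\ver p^2$ one has deterministically $|\spannedG{G}{S}|\le\binom{|S|}{2}<\tfrac{|S|^2}{2}<\epsilon|S|\ver p^2$, so the event in part three is actually empty as written (the inherited constants from~\cite{demarco2013triangle} appear to make it vacuous). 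If you reproduce the proof from scratch, part three should be treated on its own terms rather than through the same absorption argument as the other four.
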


Let \hypertarget{Q_er_prop}{$Q$} be the following event: ``all edges of $G$ are in at least one triangle''.
\begin{lemma}[Lemma 3.1 of \cite{demarco2013triangle}]
\label{lemma_3_1}
For each $\eta > 0$, there is a $\gamma_\eta > 0$ such that if $p >c \cdot \sqrt{\frac{\log\ver}{\ver}}$ with $c > 0$ big enough then
\begin{equation*}
    \Prob{\hyperlink{Q_er_prop}{Q} \land \left[ \exists F \in \Tspace^\perp \backslash \Cspace^\perp \text{ such that } |F| < \frac{1 - \eta}{4}\ver^2 p \right]} < \ver^{- \gamma_\eta c}
\end{equation*}
\end{lemma}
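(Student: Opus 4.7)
The plan is to adapt the proof of \cite[Lemma 3.1]{demarco2013triangle} while explicitly tracking the dependence of the probability bounds on the constant $\rho$. The overall strategy has three main ingredients: a structural description of elements in $\Tspace^\perp \setminus \Cspace^\perp$, the translation of this structure into an unlikely configuration of $G$ (controlled by Propositions \ref{proposition_2_3}--\ref{proposition_2_6}), and a conversion of the resulting exponential bounds to the required polynomial-in-$\ver$ form.

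First, I analyze the local structure imposed on any $F \in \Tspace^\perp$. For each vertex $v$, partition $\neighG{v}$ into $A_v = \{w \in \neighG{v} : vw \in F\}$ and $B_v = \neighG{v} \setminus A_v$. The even-intersection condition on triangles forces the edges of $F$ inside $\spannedG{G}{\neighG{v}}$ to be exactly the edges of $G$ joining $A_v$ to $B_v$. Consequently $F \in \Cspace^\perp$ if and only if the local partitions $\{A_v, B_v\}_{v\in V}$ are induced by a single global partition $V = P \sqcup \bar P$. Since $F \notin \Cspace^\perp$, there must be adjacent vertices whose local partitions disagree, and under event $Q$ (which guarantees that every edge of $F$ lies in some triangle) this disagreement can be made combinatorially explicit.

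Second, I combine the size constraint $|F| < \frac{1-\eta}{4}\ver^2 p$ with this structural analysis. Since $\sum_v |A_v| = 2|F|$, most vertices carry a strongly unbalanced local partition, placing us in the regime of small ``cuts''. A disagreement between two local partitions forces either a bipartite cut $\cut{S,T}$ whose size deviates from $|S||T|p$ by a $\Theta(\eta)$-factor, or a neighborhood $\neighG{v}$ that spans atypically many edges. Choosing the $\epsilon$-parameters of Propositions \ref{proposition_2_4} and \ref{proposition_2_6} as functions of $\eta$, each such configuration has probability at most $\exp(-c_\eta \ver p^2)$ for a constant $c_\eta > 0$ depending only on $\eta$, while the estimates of Proposition \ref{proposition_2_3} and the remaining parts of Proposition \ref{proposition_2_6} decay even faster.

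Finally, I convert these bounds to the claimed $\ver^{-\gamma_\eta \rho}$ form. Substituting $p > \rho\sqrt{\log\ver/\ver}$ gives $\exp(-c_\eta \ver p^2) \leq \ver^{-c_\eta \rho^2}$, which dominates $\ver^{-\gamma_\eta \rho}$ as soon as $\rho \geq \gamma_\eta/c_\eta$. The hard part will be the union bound over candidate $F$'s: naively enumerating all $2^{\binom{\ver}{2}}$ subsets is hopelessly expensive, and one must exploit the structural description above to parametrize $F$ (up to a controlled number of local defects) by a global vertex partition plus small perturbations, reducing the effective count to $2^{O(\ver)}$ or better. Once this is achieved, choosing $\rho$ large enough depending on $\eta$ absorbs all polynomial and $2^{O(\ver)}$ pre-factors into the exponent $c_\eta \rho^2 \log\ver$, yielding the claimed bound $\ver^{-\gamma_\eta \rho}$ with $\gamma_\eta$ determined by $c_\eta$ and the pre-factors. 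The main departure from \cite{demarco2013triangle} is purely quantitative: each step of their argument already implicitly provides an exponential decay rate, and our task is to keep track of the constant in front of $\log\ver$ in order to see it as a polynomial bound of degree linear in $\rho$.
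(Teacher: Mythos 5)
Your structural analysis in the first paragraph (local partitions $\{A_v,B_v\}$, the characterization of $F\in\Cspace^\perp$ via a global vertex bipartition, the need for ``local defects'' when $F\notin\Cspace^\perp$) is on target and mirrors the combinatorics in \cite[Section~4]{demarco2013triangle}. Your final paragraph's conversion from exponential bounds of the form $\exp(-c_\eta\, n p^2)$ to $n^{-c_\eta \rho^2}$ via $p>\rho\sqrt{\log n/n}$ is also exactly the quantitative adaptation the paper performs.

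The genuine gap is the step you yourself flag as ``the hard part'': a union bound over candidate $F$'s. This is not how the argument can go, and it would fail even after reducing the candidate count to $2^{O(n)}$. If the per-$F$ failure probability is on the order of $\exp(-c_\eta n p^2)\approx n^{-c_\eta\rho^2}$, then $2^{O(n)}\cdot n^{-c_\eta\rho^2}=\exp\bigl(O(n)-c_\eta\rho^2\log n\bigr)$ diverges for any fixed constant $\rho$; choosing $\rho$ large (but independent of $n$) cannot absorb a $2^{\Theta(n)}$ pre-factor. The actual argument avoids any union bound over $F$ entirely. Propositions~\ref{proposition_2_3}, \ref{proposition_2_4}, and~\ref{proposition_2_6} assert that a fixed, finite list of global regularity events for $G$ holds with failure probability bounded by $\exp(-\gamma\, n p)$, $\exp(-\gamma\, n p^2)$, etc.\ (these events already internalize their own union bounds over subsets $S,T\subset V$). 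The heart of the lemma is the \emph{deterministic} implication: if all those regularity events hold and $Q$ holds, then \emph{no} $F\in\Tspace^\perp\setminus\Cspace^\perp$ with $|F|<\frac{1-\eta}{4}n^2 p$ can exist. Consequently the probability of the bad event is bounded purely by the failure probability of the regularity events, which under $p>\rho\sqrt{\log n/n}$ gives the polynomial bound $n^{-\gamma_\eta\rho}$ once $\rho$ is large enough. So the fix is structural: replace the $F$-indexed union bound by showing, as in \cite{demarco2013triangle}, that the existence of a small bad $F$ contradicts the (high-probability) pseudo-randomness of $G$.
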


In the original lemma, there is no restriction on the value of $p$ (that can be as small as wanted).
We do not need such a general result, and bounding the value of $p$ makes it possible to control the value of the probability better (which is what we need).
\begin{proof}
[Sketch of the proof.]
As explained in \cite[Section 4]{demarco2013triangle}, the proof consists in showing that, if the properties of \cref{proposition_2_3,proposition_2_4,proposition_2_6}, are satisfied, then the event 
\begin{equation*}
\left\{ \hyperlink{Q_er_prop}{Q} \land \left[ \exists F \in \Tspace^\perp \backslash \Cspace^\perp \text{ such that } |F| < \frac{1 - \eta}{4}\ver^2 p \right] \right\}  
\end{equation*}
cannot occur.
From the exponential bounds of these propositions, we can show that the probability that one of the properties is not satisfied is at most $\ver^{- \gamma c}$ for some $\gamma > 0$, hence the result.
\end{proof}

The following lemma is deterministic and as such does not need adjustment: we display it here for completeness.
\begin{lemma}[Lemma 3.3 of \cite{demarco2013triangle}]
\label{lemma_3_3}
For every $\eta >0$ and $\delta > 0$, if $F \subset \fullG{\ver}$ satisfies $|F| > \frac{1 - \delta}{4} \ver^2$ and $|F \backslash \Pi| > \eta \cdot \ver^2$ for every cut $\Pi$, then for each $\epsilon > 0$, $\nbtri{F} > \frac{1}{12} (\eta - 3 \delta - \epsilon) \ver^3$.
\end{lemma}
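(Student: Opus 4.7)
The plan is to apply the cut hypothesis to a well-chosen family of cuts and then perform a double-counting argument. For each vertex $w \in V$, I would apply the hypothesis to the cut $\cut{\neigh{F}{w}}$: since every edge of $F$ incident to $w$ lies in this cut, the remaining edges in $F \setminus \cut{\neigh{F}{w}}$ are precisely those lying entirely inside $\neigh{F}{w}$ or entirely inside $V \setminus \neigh{F}{w}$. Hence the hypothesis reads
\[
|\spannedG{F}{\neigh{F}{w}}| + |\spannedG{F}{V \setminus \neigh{F}{w}}| > \eta \ver^2
\qquad \text{for every } w \in V.
\]

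Summing over all $w \in V$, the two sums admit clean combinatorial interpretations. The first term $|\spannedG{F}{\neigh{F}{w}}|$ counts triangles of $F$ through $w$; summed over $w$, each triangle contributes three times, giving $3\nbtri{F}$. For the second term, swapping the order of summation, for each edge $uv \in F$ I count the vertices $w$ with $u, v \in V \setminus \neigh{F}{w}$: these form the set $V \setminus (\neigh{F}{u} \cup \neigh{F}{v})$, of cardinality $\ver - d_F(u) - d_F(v) + |\neigh{F}{u} \cap \neigh{F}{v}|$, where $d_F(v) = |\neigh{F}{v}|$. Using $\sum_{uv \in F}(d_F(u)+d_F(v)) = \sum_v d_F(v)^2$ and $\sum_{uv \in F} |\neigh{F}{u} \cap \neigh{F}{v}| = 3 \nbtri{F}$ (each triangle counted once per edge), the second sum evaluates to $\ver |F| - \sum_v d_F(v)^2 + 3\nbtri{F}$. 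Combining the two identities yields the master inequality
\[
6 \nbtri{F} > \eta \ver^3 - \ver |F| + \sum_v d_F(v)^2.
\]

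The final step disposes of the two remaining quantities on the right. I would apply the Cauchy--Schwarz inequality $\sum_v d_F(v)^2 \geq (2|F|)^2/\ver = 4|F|^2/\ver$, reducing the right-hand side to a function of $e = |F|$ alone, namely $\eta \ver^3 - \ver e + 4e^2/\ver$. This is a convex parabola in $e$ with minimum at $e = \ver^2/8$, and the hypothesis $|F| > (1-\delta)\ver^2/4$ keeps $e$ strictly above the vertex in the non-trivial regime (the stated conclusion is vacuous for $\delta \geq 1/3$). The minimum over the feasible range is therefore attained at the left endpoint $e = (1-\delta)\ver^2/4$, yielding $6\nbtri{F} > \eta \ver^3 - \delta(1-\delta)\ver^3/4 \geq (\eta - \delta/4)\ver^3$. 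Dividing by $6$ already implies the stated bound $\nbtri{F} > \frac{1}{12}(\eta - 3\delta - \epsilon)\ver^3$ for every $\epsilon > 0$; the $\epsilon$ slack in the statement absorbs any lower-order inefficiency in the computation.

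The main obstacle is the careful bookkeeping in the double-counting step — in particular, recognising that edges of $F$ incident to $w$ are automatically absent from $F \setminus \cut{\neigh{F}{w}}$, which is what enables the clean decomposition into two spanned-subgraph terms. Once that is settled, the rest reduces to a standard convexity and Cauchy--Schwarz computation; no probabilistic or regularity-lemma machinery is needed, and the conclusion holds uniformly in $n$ rather than only asymptotically.
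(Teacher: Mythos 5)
Your proof is correct, and in fact the paper does not supply its own argument for this lemma: it is displayed purely for reference and attributed to DeMarco--Hamm--Kahn, so you have produced a self-contained proof of a statement the paper treats as a black box. The double counting is exactly right. Applying the cut hypothesis to $\nabla(\neigh{F}{w})$ is legitimate because every edge of $F$ at $w$ has its other endpoint in $\neigh{F}{w}$ while $w$ itself does not lie there, so $F\setminus\nabla(\neigh{F}{w}) = F[\neigh{F}{w}]\cup F[V\setminus\neigh{F}{w}]$ is indeed the correct decomposition, and both identities $\sum_w |F[\neigh{F}{w}]|=3\nbtri{F}$ and $\sum_{uv\in F}|\neigh{F}{u}\cap\neigh{F}{v}|=3\nbtri{F}$ check out. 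The Cauchy--Schwarz step and the analysis of the parabola are sound; the only imprecision is the side remark that the conclusion is ``vacuous for $\delta\geq 1/3$'' (since $\eta<1/4$ always, it is already vacuous for $\delta>1/12$), but this is cosmetic: your calculation actually gives $6\nbtri{F} > \bigl(\eta - \delta(1-\delta)/4\bigr)n^3$ when $\delta<1/2$, and for $\delta\geq 1/2$ one can simply bound by the global vertex value $g(n^2/8)=(\eta-1/16)n^3$, which is still stronger than the claimed $\frac{1}{2}(\eta-3\delta-\epsilon)n^3$. In either case your bound strictly dominates the stated one, and the $\epsilon$ slack in the lemma is not even needed here, since your argument is exact rather than asymptotic.
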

\begin{proof}[Proof of \cref{lemma_er_beta_appendix}]
Let $c > 0$. When $p > c \cdot \sqrt{\frac{\log\ver}{\ver}}$, the event \hyperlink{Q_er_prop}{$Q$} = ``all edges of $G$ are in at least one triangle'' is very likely to happen; indeed, we can show that $\Prob{\neg \hyperlink{Q_er_prop}{Q}} < \ver^{2 - c^2}$.
Therefore, by choosing $c$ large enough, we can make the probability smaller than $\ver^{-4}$.
We can thus assume that \hyperlink{Q_er_prop}{$Q$} is satisfied, and we will then try to bound the probability that $\Tspace^\perp \neq \Cspace^\perp$ under this assumption.
Let $\eta > 0$. 
We suppose that $\Tspace^\perp \neq \Cspace^\perp$, so that $\Tspace^\perp \backslash \Cspace^\perp$ is not empty.
Because of \cref{lemma_3_1}, we can then assume that for all $F \in \Tspace^\perp \backslash \Cspace^\perp$, $|F| \geq \frac{1 - \eta}{4}\ver^2 p$.

Now, let $\vartheta = 0.1 \eta^2$.
We then choose $G$ as follows: first let $G_0 \sim G(\ver, \vartheta p)$, and then add edges of $\fullG{\ver} \backslash G_0$ independently with probability $(1 - \vartheta)p/(1 - \vartheta p)$.
Thus $G \sim G(\ver, p)$ and $G_0$ is a (small) subgraph of $G$.
Let $F_0 = F \cap G_0$.
Let $A = \{ xy \in \fullG{\ver} \backslash G_0, \neigh{G_0}{x,y} \neq \emptyset \}$, $J = \fullG{\ver} \backslash (G_0 \cup A)$ and $B = \{ xy \in A | z \in \neigh{G_0}{x,y} \implies |\{xz, yz\} \cap F_0| = 1 \}$.
Then the following equality holds:
\begin{equation*}
\label{eqGB}
    F \backslash (F_0 \cup J) = G \cap B.
\end{equation*}

Furthermore, \cref{lemma_3_1} implies that one of these must hold:
\begin{enumerate}
    \item $ |B| < \frac{1 - 2 \eta}{4} \ver^2;$
    \item There is a cut $\Pi$ such that $|B \backslash \Pi| < 0.05 \ver^2;$
    \item $\tau(B) > 0.004 \ver^3.$
\end{enumerate}

At the same time, assuming that the probabilistic properties stated in \cref{proposition_2_3,proposition_2_4,proposition_2_6} are true, $B$ must satisfy all of the following:
\begin{enumerate}[resume]
    \item $|G \cap B| > (1 - \eta - 2 \vartheta (1 + \epsilon) - \epsilon) \frac{n^2 p}{4};$
    \item For every cut $\Pi$, $|G \cap (B \backslash \Pi)| > \frac{0.4 - 2 \vartheta - \epsilon}{4} \ver^2 p;$
    \item $G \cap B$ is triangle-free.
\end{enumerate}

Since $p$ is large, $G$ contains a large number of edges with high probability, and $G \cap B$ contains a big part of the total graph $B$.
Therefore, these conditions on $B$ and $G \cap B$ are incompatible.
Indeed, we can show that:
\begin{enumerate}[resume]
\item $\Prob{\exists F_0 \subset G_0 \text{ such that } |B| < \frac{1 - 2 \eta}{4}} \leq \exp(- 0.02 \eta^2 \cdot \ver^2 p);$
\item $\Prob{\exists F_0 \subset G_0, \exists \text{ a cut } \Pi \text{ such that } |B \backslash \Pi| < 0.05 \ver^2} \leq \exp(- 0.05 \ver^2 p) \text{ when $\eta$ is small;}$
\item $\Prob{\exists F_0 \subset G_0 \text{ such that } \nbtri{B} > 0.004 \ver^3 \land G \cap B \text{ triangle-free}} < \exp (- (0.0006 - \vartheta) \ver^2 p)$.
\end{enumerate}

If $c$ is large enough, then for every $p > c \cdot \sqrt{\frac{\log\ver}{\ver}}$, all these probabilities are $\mathcal{O}(\ver^{-4})$.
This is precisely  what we want: the probability that one of the assumptions we made so far is not satisfied is $\mathcal{O}(\ver^{-4})$, and if all the assumptions we made are satisfied, then the only scenario where $\Tspace^\perp \neq \Cspace^\perp$ have a probability $\mathcal{O}(\ver^{-4})$ to occur.
Hence, $\Prob{\Tspace^\perp \neq \Cspace^\perp} = \mathcal{O}(\ver^{-4})$.
\end{proof}

\end{document}